\theoremstyle{thmstyleone}%
\newtheorem{theorem}{Theorem}
\newtheorem{corollary}{Corollary}
\newtheorem{proposition}[theorem]{Proposition}%
\theoremstyle{thmstyletwo}%
\newtheorem{example}{Example}%
\newtheorem{remark}{Remark}%
\theoremstyle{thmstylethree}%
\newtheorem{definition}{Definition}%
\begin{document}

\title[The upper capacity topological entropy of free semigroup actions]{The upper capacity topological entropy of free semigroup actions for certain non-compact sets, \uppercase\expandafter{\romannumeral2}}

\author[1]{ \sur{Yanjie Tang }}\email{yjtang1994@gmail.com}
\author[2]{\sur{Xiaojiang Ye}}\email{yexiaojiang12@163.com}
\author*[3]{\sur{Dongkui Ma}}\email{dkma@scut.edu.cn}

\affil[1]{\orgdiv{School of Mathematics}, \orgname{South China University of Technology}, \city{Guangzhou}, \postcode{510641},  \country{P.R. China}}
\affil[2]{\orgdiv{School of Mathematics}, \orgname{South China University of Technology}, \city{Guangzhou}, \postcode{510641},  \country{P.R. China}}
\affil*[3]{\orgdiv{School of Mathematics}, \orgname{South China University of Technology}, \city{Guangzhou}, \postcode{510641},  \country{P.R. China}}


\abstract{
	This paper's major purpose is to continue the work of Zhu and Ma \cite{MR4200965}.  To begin, the $\mathbf{g}$-almost product property, more general irregular and regular sets,  and some new notions of the Banach upper density recurrent points and transitive points of free semigroup actions are introduced. Furthermore, under the $\mathbf{g}$-almost product property and other conditions, we coordinate the Banach upper recurrence, transitivity with (ir)regularity, and obtain lots of generalized multifractal analyses for general observable functions of free semigroup actions. Finally, statistical $\omega$-limit sets are used to consider the upper capacity topological entropy of the sets of Banach upper recurrent points and transitive points of free semigroup actions, respectively. Our analysis generalizes the results obtained by Huang, Tian and Wang \cite{MR3963890}, Pfister and Sullivan \cite{MR2322186}.
}

\keywords{Free semigroup actions; Upper capacity topological entropy; Multifractal analysis; $\mathbf{g}$-almost product property; Banach upper recurrence}

\pacs[MSC Classification]{37B40, 37B20, 37C45, 37B05}

\maketitle

\section{Introduction}\label{sec1}
A dynamical system $(X,d,f)$ means always that $(X,d)$ is a compact metric space and $f$ is a continuous self map on $X$. One of the fundamental issues in dynamical systems is how points with similar asymptotic behavior influence or determine the system's complexity. Topological entropy is a term used to describe the dynamical complexity of a dynamical system.  {Using a similar defining way as the Hausdorff dimension, Bowen  \cite{MR338317} and Pesin \cite{MR1489237} extended the concept of topological entropy to non-compact sets. On the basis of this theory, researchers are paying more attention to study Hausdorff dimension or topological entropy for non-compact sets, for example, multifractal spectra and saturated sets of dynamical systems, see, \cite{MR2322186,MR3436391,MR1971209, MR1439805, MR2931333,  MR1759398, MR3963890, MR3833343, MR2158401}. In particular, recent work about non-compact sets with full entropy has attracted greater attention than before, see \cite{ MR4200965, MR3963890, MR3833343, MR3436391,MR2158401}.}
As a result, the size of the topological entropy of the ‘periodic-like’ recurrent level sets  is always in focus, see \cite{MR3963890,DongandTian1,DongandTian2,MR3436391,MR4200965}.

Most of different recurrent level sets are considered. The concepts of periodic point, recurrent point, almost periodic point and non-wandering point are described, see \cite{MR648108}, the concepts of (quai)weakly almost periodic point (sometimes called (upper)lower density recurrent point) can be see in \cite{MR1223083,MR2039054,1993Measure,1995Level}. Moreover, Zhou \cite{1995Level} firstly linked the recurrent frequency with the support of invariant measures which are the limit points of the empirical measure, which provided a tremendous benefit to researchers.

Birkhoff ergodic average \cite{MR1439805,MR1489237}(or Lyapunov exponents \cite{MR2645746}) is a well-known method to distinguish the asymptotic behavior of a dynamical orbit. Let $\varphi: X\to\mathbb{R}$ be a continuous function and $a$ a real number.  Consider a level set of Birkhoff averages
$$
R_\varphi(a):=\left \{x\in X: \lim_{n\to \infty}\frac{1}{n}\sum_{j=0}^{n-1}\varphi\left (f^j(x)\right )=a\right \}.
$$
These $R_\varphi(a)$ form a multifractal decomposition and the function 
$$
a\to h_{R_\varphi(a)}(f)
$$
is a entropy spectrum, where $h_{R_\varphi(a)}(f)$ denotes the topological entropy of $R_\varphi(a)$ defined by \cite{MR338317}. Takens and Verbitskiy \cite{MR1971209} proved that transformations satisfying the specification has the following variational principle
\begin{equation}\label{1.1}
	h_{R_\varphi(a)}\left(f\right)=\sup \left\{ h_{\mu}(f): \mu\text{ is invariant and }\int \varphi d \mu=a\right\}
\end{equation}
where $h_\mu(f)$ is the measure-theoretic (or Kolmogorov-Sinai) entropy of the invariant measure $\mu$. In \cite{MR2322186}, the formula (\ref{1.1}) holds  under the conditions of the $\mathbf{g}$-almost product property which was introduced by Pfister and Sullivan. Consider $\varphi$-irregular set as follows: 
$$
I_\varphi:=\left \{x\in X:\lim_{n\to\infty}\frac{1}{n}\sum_{j=0}^{n-1}\varphi \left (f^j(x)\right )\text{ diverges}\right \}.
$$
The irregular set is not detectable from the standpoint of invariant measures, according to Birkhoff's ergodic theorem. However, in systems with certain properties, the irregular set may carry full topological entropy, see \cite{MR2931333,MR3833343,MR2158401,MR775933,MR1759398} and references therein. 

Based on these results, Tian \cite{MR3436391}  distinguished various periodic-like recurrences and discovered that they all carry full topological entropy, as do their gap-sets, under the $\mathbf{g}$-almost product property and other conditions. Furthermore, the author  \cite{MR3436391} combined the periodic-like recurrences with (ir)regularity to obtain a large number of generalized multifractal analyses for all continuous observable functions. Later on, Huang, Tian and Wang \cite{MR3963890} introduced an abstract version of multifractal analysis for possible applicability to more general function. Let $\alpha :\mathcal{M}(X,f)\to\mathbb{R}$ be a continuous function where $\mathcal{M}(X,f)$ denotes the set of all $f$-invariant probability measures on $X$. Define more general regular and irregular sets as follows
$$
R_\alpha:=\left \{x\in X : \inf_{\mu\in M_x}\alpha (\mu)=  \sup_{\mu\in M_x}\alpha (\mu)\right \},
$$
$$
I_\alpha:=\left \{x\in X : \inf_{\mu\in M_x}\alpha (\mu)<  \sup_{\mu\in M_x}\alpha (\mu)\right \},
$$
respectively, where $M_x$ stands for the set of all limit points of the empirical measure. Furthermore, the authors \cite{MR3963890} established an abstract framework on the combination of {Banach upper recurrence,} transitivity and multifractal analysis of general observable functions. Learned from \cite{MR1601486} for maps and \cite{MR1716564} for flows, Dong and Tian \cite{DongandTian1} defined the statistical $\omega$-limit sets to describe different statistical structure of dynamical orbits using concepts of density, and considered multifractal analysis on various non-recurrence and Birkhoff ergodic averages. Furthermore,  the authors \cite{MR3963890} used statistical $\omega$-limit sets to obtain the results of topological entropy on the refined orbit distribution of
{Banach upper recurrence.}

On the other hand, Ghys et al \cite{MR926526} proposed a definition of topological entropy for finitely generated pseudo-groups of continuous maps.
Later on, the topological entropy of free semigroup actions on a compact metric space defined by Bi\'{s} \cite{MR2083436} and Bufetov \cite{MR1681003}, respectively. Rodrigues and Varandas \cite{MR3503951} and Lin et al \cite{MR3774838} extended the work of Bufetov \cite{MR1681003} from various perspectives. Similar to the methods of Bowen \cite{MR338317} and Pesin \cite{MR1489237}, Ju et al \cite{MR3918203} introduced topological entropy and upper capacity topological entropy of free semigroup actions on non-compact sets which extended the results obtained by \cite{MR1681003,MR338317,MR1489237}.  Zhu and Ma \cite{MR4200965} investigated the upper capacity topological entropy of free semigroup actions for certain non-compact sets. More relevant results are obtained, see \cite{MR3503951,MR3828742,MR3784991,MR3774838,MR3918203,MR4200965,MR3592853,MR1767945}. 

{
	The above results raise the question of whether similar sets exist in dynamical system of free semigroup actions. Further, one can ask if the sets have full topological entropy or full upper capacity topological entropy, and if  an abstract framework of general observable functions can be established. To answer the above questions, we introduce different asymptotic behavior of points and more general irregular set. Our results show that various subsets characterized by distinct asymptotic behavior may carry full upper capacity topological entropy of free semigroup actions. Our analysis is to continue the work of \cite{MR4200965} and generalize the results obtained by \cite{MR3963890} and \cite{MR2322186}.}


This paper is organized as follows. In Sec. \ref{main results}, we give our main results. In Sec. \ref{Section-Preliminaries-2}, we give some preliminaries.  
{
In Sec. \ref{3}, we introduce some concepts of transitive points, quasiregular points, upper recurrent points and Banach upper recurrent points  with respect to a certain orbit of free semigroup actions.
On the other hand, the g-almost product property of free semigroup actions, 
which is weaker than the specification property of free semigroup actions, is introduced. 
}
In Sec. \ref{Irregular and regular set}, the upper capacity topological entropy of free semigroup actions on the more general irregular and regular sets  is considered, and some examples satisfying the assumptions of our main results are described. In Sec. \ref{BR and QW}, we give the proofs of our main results.

\section{Statement of main results}\label{main results}
Let $(X,d)$ be a compact metric space and $G$ the free semigroup generated by $m$ generators $f_0,\cdots,f_{m-1}$ which are continuous maps on $X$. Let $\Sigma^+_m$ denote the one-side symbol space generated by the digits $\{0,1,\cdots,m-1\}$. Recall that the skew product transformation {is given as follows}:
$$
F:\Sigma^+ _m \times X\to\Sigma^+ _m \times X,\:\, (\iota,x)\mapsto \big(\sigma(\iota),f_{i_0}(x)\big),
$$
where $\iota=(i_0, i_1,\cdots)\in\Sigma_{m}^+$, $x\in X$ and $\sigma$ is the shift map of $\Sigma^+ _m $. Let $\mathcal{M}(\Sigma_{m}^+\times X,F)$ denote the set of invariant measures of the skew product $F$.  Denote by $\mathrm{Tran}(\iota,G)$, $\mathrm{QR}(\iota,G)$, $\mathrm{QW}(\iota,G)$ and $\mathrm{BR}(\iota,G)$ the sets of the transitive points, the quasiregular points, the upper recurrent points and the Banach upper recurrent points  of free semigroup action $G$ with respect to $\iota\in\Sigma_{m}^+$, respectively (see Sec. \ref{3}).

Let $\alpha: \mathcal{M}(\Sigma_{m}^+\times X, F)\to\mathbb{R}$ be a continuous function. Given $(\iota, x) \in \Sigma_{m}^+\times X$, let $M_{(\iota,x)}(F)$ be the set of all limit points of $\{\frac{1}{n}\sum_{j=0}^{n-1}\delta_{F^j(\iota,x)}\}$ in weak$^*$ topology.  Let $L_\alpha:=[\inf_{\nu\in M_{(\iota,x)}(F)}\alpha(\nu),\, \sup_{\nu\in M_{(\iota,x)}(F)}\alpha(\nu)]$ and $\mathrm{Int}(L_\alpha)$ denote the interior of interval $L_\alpha$. 
{
	  For possibly applicability to more general functions, \cite{MR3963890} defined three conditions for $\alpha$ to introduce an abstract version of multifractal analysis. To understand our main results, we list the three conditions}(see \cite{MR3963890} for details):
		\begin{itemize}
			\item [A.1] For any $\mu, \nu \in \mathcal{M}(\Sigma_{m}^+\times X, F)$, $\beta(\theta):=\alpha(\theta \mu+(1-\theta) \nu)$ is strictly monotonic on $[0,1]$ when $\alpha(\mu) \neq \alpha(\nu)$.
			\item [A.2] For any $\mu, \nu \in \mathcal{M}(\Sigma_{m}^+\times X, F)$, $\beta(\theta):=\alpha(\theta \mu+(1-\theta) \nu)$ is constant on $[0,1]$ when $\alpha(\mu)=\alpha(\nu)$.
			\item [A.3] For any $\mu, \nu \in \mathcal{M}(\Sigma_{m}^+\times X, F)$, $\beta(\theta):=\alpha(\theta \mu+(1-\theta) \nu)$ is not constant over any subinterval of $[0,1]$ when $\alpha(\mu) \neq \alpha(\nu)$.
		\end{itemize}
	
Given $\iota\in\Sigma_{m}^+$, we introduce the general regular set and irregular set with respect to $\iota$ as follows:
	$$
	R_\alpha(\iota,G):=\left \{x\in X: \inf_{\nu\in M_{(\iota,x)}(F)}\alpha(\nu)= \sup_{\nu\in M_{(\iota,x)}(F)}\alpha(\nu)\right \},
	$$
	$$
	I_\alpha(\iota,G):=\left \{x\in X: \inf_{\nu\in M_{(\iota,x)}(F)}\alpha(\nu)< \sup_{\nu\in M_{(\iota,x)}(F)}\alpha(\nu)\right \}.
	$$

Let $C_{(\iota,x)}=\overline{\cup_{\nu \in M_{(\iota,x)}(F)} S_{\nu}}$, where $S_\nu$ denotes the support of measure $\nu$.
Let $\mathrm{BR}^{\#}(\iota,G):=\mathrm{BR}(\iota,G) \backslash \mathrm{QW} (\iota,G)$. Fixed $\iota\in \Sigma_{m}^+$, the following sets are introduced to more precisely characterize the recurrence of the orbit,
$$
\begin{aligned}
	W(\iota,G)&:=\left\{x\in X :S_{\nu}=C_{(\iota,x)} \text { for every } \nu \in M_{(\iota,x)}(F)\right\}, \\
	V(\iota,G)&:=\left\{x\in X: \exists \nu \in M_{(\iota,x)}(F) \text { such that } S_{\nu}=C_{(\iota, x)}\right\}, \\
	S(\iota,G)&:=\left\{x\in X: \cap_{\nu \in M_{(\iota,x)}(F)} S_{\nu} \neq \emptyset\right\} .
\end{aligned}
$$

 More specifically, $\mathrm{BR}^{\#}(\iota,G)$ is subdivided into the following several levels with different asymptotic behaviour:
 $$
 \begin{aligned}
 	&\mathrm{BR}_{1}(\iota,G):=\mathrm{BR}^{\#}(\iota,G)\cap W(\iota,G), \\
 	&\mathrm{BR}_{2}(\iota,G):=\mathrm{BR}^{\#}(\iota,G)\cap(V(\iota,G) \cap S(\iota,G)), \\
 	&\mathrm{BR}_{3}(\iota,G):=\mathrm{BR}^{\#}(\iota,G)\cap V(\iota,G),\\
 	&\mathrm{BR}_{4}(\iota,G):=\mathrm{BR}^{\#}(\iota,G)\cap (V(\iota,G) \cup S(\iota,G)),\\
 	&\mathrm{BR}_{5}(\iota, G):=\mathrm{BR}^{\#}(\iota,G).
 \end{aligned}
 $$
 Then $\mathrm{BR}_{1}(\iota,G) \subseteq \mathrm{BR}_{2}(\iota,G) \subseteq \mathrm{BR}_{3}(\iota,G) \subseteq \mathrm{BR}_{4}(\iota,G) \subseteq \mathrm{BR}_{5}(\iota,G)$.
 
 Analogously, $\mathrm{QW}(\iota,G)$ is also subdivided into the following several levels with varying asymptotic behaviour:
 $$
 \begin{aligned}
 	&\mathrm{QW}_{1}(\iota,G):=\mathrm{QW}(\iota,G)\cap W(\iota,G), \\
 	&\mathrm{QW}_{2}(\iota,G):=\mathrm{QW}(\iota,G)\cap (V(\iota,G) \cap S(\iota,G)), \\
 	&\mathrm{QW}_{3}(\iota,G):=\mathrm{QW}(\iota,G)\cap V(\iota,G),\\
 	&\mathrm{QW}_{4}(\iota,G):=\mathrm{QW}(\iota,G)\cap (V(\iota,G) \cup S(\iota,G)),\\
 	&\mathrm{QW}_{5}(\iota, G):=\mathrm{QW}(\iota,G).
 \end{aligned}
 $$
 Note that $\mathrm{QW}_{1}(\iota,G) \subseteq \mathrm{QW}_{2}(\iota,G) \subseteq \mathrm{QW}_{3}(\iota,G) \subseteq \mathrm{QW}_{4}(\iota,G) \subseteq \mathrm{QW}_{5}(\iota,G)$.

Let $Z_0(\iota,G),Z_1(\iota,G),\cdots,Z_k(\iota,G)$ and $Y(\iota,G)$ be the subsets of $X$ with respect to $\iota\in\Sigma_{m}^+$. Consider the sets, for $j=1,\cdots,k$,
$$
M_j(Y):=\cup_{\iota\in\Sigma_{m}^+}\big(Y(\iota,G)\cap(Z_j(\iota,G)\setminus Z_{j-1}(\iota,G))\big),
$$
then we say that the sets $M_1(Y), M_2(Y),\cdots,M_k(Y)$ is the unions of gaps of $\{Z_0(\iota,G),Z_1(\iota,G),\cdots,Z_k(\iota,G)\}$  with respect to $Y(\iota,G)$ for all $\iota\in\Sigma_{m}^+$.

Now we start to state our main theorems.
	\begin{theorem}\label{entropy of BR-1}
		Suppose that $G$ has the $\mathbf{g}$-almost product property, there exists a $\mathbb{P}$-stationary measure (see Sec. \ref{Stationary measure}) on $X$ with full support where $\mathbb{P}$ is a Bernoulli measure on $\Sigma_{m}^+$.  Let $\alpha: \mathcal{M}(\Sigma_{m}^+\times X, F)\to\mathbb{R}$ be a continuous function.
		\begin{itemize}
			\item [(1)] If $\alpha$ satisfies A.3 and $\mathrm{Int}(L_\alpha)\neq\emptyset$, then the unions of gaps of
			$$
			\left \{\mathrm{QR}(\iota, G), \mathrm{BR}_1(\iota,G), \mathrm{BR}_2(\iota,G),\mathrm{BR}_3(\iota,G),\mathrm{BR}_4(\iota,G),\mathrm{BR}_5(\iota,G)\right \}
			$$
			with respect to $I_\alpha (\iota,G)\cap\mathrm{Tran}(\iota,G)$ for all $\iota\in\Sigma_{m}^+$ have full upper capacity topological entropy of free semigroup action $G$.
			\item [(2)] If the skew product $F$ is not uniquely ergodic, then the unions of gaps of
			$$
			\left \{\emptyset, \mathrm{QR}(\iota, G)\cap \mathrm{BR}_1(\iota,G),\mathrm{BR}_1(\iota,G), \mathrm{BR}_2(\iota,G),\mathrm{BR}_3(\iota,G),\mathrm{BR}_4(\iota,G),\mathrm{BR}_5(\iota,G)\right \}
			$$
			with respect to $\mathrm{Tran}(\iota,G)$ for all $\iota\in\Sigma_{m}^+$ have full upper capacity topological entropy of free semigroup action $G$.
			\item [(3)] If the skew product $F$ is not uniquely ergodic and $\alpha$ satisfies A.1 and A.2, then the unions of gaps of
			$$
			\left \{\emptyset, \mathrm{QR}(\iota, G)\cap \mathrm{BR}_1(\iota,G),\mathrm{BR}_1(\iota,G), \mathrm{BR}_2(\iota,G),\mathrm{BR}_3(\iota,G),\mathrm{BR}_4(\iota,G),\mathrm{BR}_5(\iota,G)\right \}
			$$
			with respect to $R_\alpha(\iota,G)\cap\mathrm{Tran}(\iota,G)$ for all $\iota\in\Sigma_{m}^+$ have full upper capacity topological entropy of free semigroup action $G$.
		\end{itemize}
	\end{theorem}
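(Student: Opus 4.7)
The plan is to lift everything to the skew product $F:\Sigma_m^+\times X\to\Sigma_m^+\times X$ and mimic the Pfister--Sullivan / Huang--Tian--Wang strategy, adapted to the two-sided structure of free semigroup actions. The upper capacity topological entropy of $G$ on a subset $Z\subseteq X$ can be related, via the fibered Bowen ball formalism already used in the proof machinery of Ju--Ma--Wang and Zhu--Ma, to the growth rate of $(n,\varepsilon)$-separated subsets of $Z$ measured by the $F$-dynamics on the fibers $\{\iota\}\times Z$. Existence of a $\mathbb{P}$-stationary measure with full support on $X$, together with the Bernoulli measure $\mathbb{P}$ on $\Sigma_m^+$, will be used to produce an $F$-invariant measure $\mu_0\in\mathcal{M}(\Sigma_m^+\times X,F)$ whose support is all of $\Sigma_m^+\times X$; by the variational principle this guarantees that the topological entropy of $F$ (equivalently the entropy of $G$) is approximated from below by ergodic measures supported on the full space, which is what allows constructed fractals to witness full entropy.

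The core construction is a Moran-type saturated set. For each case I fix finitely many ergodic measures $\mu_1,\dots,\mu_k\in\mathcal{M}_e(\Sigma_m^+\times X,F)$ chosen as follows. For part (1) I pick $\mu_1,\mu_2$ with $\alpha(\mu_1)\neq\alpha(\mu_2)$ (possible since $\mathrm{Int}(L_\alpha)\neq\emptyset$), together with auxiliary measures whose supports realise the various alternatives $W$, $V\cap S$, $V$, $V\cup S$, or no constraint at all; by A.3 the function $\beta(\theta)=\alpha(\theta\mu_1+(1-\theta)\mu_2)$ varies on every subinterval, so irregularity of Birkhoff-type averages can be forced. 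For parts (2) and (3), non-unique ergodicity of $F$ gives at least two distinct ergodic measures to play with, and A.1--A.2 are used in (3) to keep $\alpha$ constant along the mixture while still producing the statistical behaviour needed. Using the $\mathbf{g}$-almost product property of $G$ (the free-semigroup analogue of Pfister--Sullivan's $\mathbf{g}$-almost product), I glue long generic orbit segments for the chosen $\mu_i$ along a carefully tuned schedule of lengths $\{n_k\}$ and gap allowances $\{m_k\}$ with $m_k/n_k\to 0$; the resulting points $x$ have a prescribed set of empirical limit measures $M_{(\iota,x)}(F)$, which lets me place them simultaneously into $\mathrm{Tran}(\iota,G)$ (by ensuring the supports of the used measures together cover the requisite dense set), into $\mathrm{BR}^{\#}(\iota,G)\setminus\mathrm{QW}(\iota,G)$ (by controlling the rarity of return times), into the appropriate level $W$, $V\cap S$, $V$, or $V\cup S$ (by tailoring which $\mu_i$'s supports agree with $C_{(\iota,x)}$), and into either $I_\alpha(\iota,G)$ or $R_\alpha(\iota,G)$ according to the case.

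The entropy lower bound then follows the standard route: refining $(n_k)$ and the shadowing precision $(\varepsilon_k)$, and invoking Katok's formula at the base measure $\mu_0$ of full support, one shows that the constructed saturated set carries $(n,\varepsilon)$-separated subsets of cardinality at least $\exp(n(h_{\mathrm{top}}(F)-\gamma))$ for arbitrary $\gamma>0$, whence the upper capacity topological entropy of the gap-set equals that of $G$. Since gaps $M_j(Y)$ are defined as differences $Z_j\setminus Z_{j-1}$ intersected with $Y(\iota,G)$ and then unioned over $\iota$, it is enough to place the constructed fractal inside the most restrictive stratum (e.g.\ $\mathrm{QR}(\iota,G)\cap\mathrm{BR}_1(\iota,G)\cap Y$ in (2)--(3), or $I_\alpha\cap\mathrm{Tran}\cap\mathrm{QR}\cap\mathrm{BR}_1$ in (1)) and observe that its entropy transfers upward to each larger gap.

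The main obstacle I expect is the simultaneous control of multiple competing conditions during the gluing step: forcing transitivity requires the orbit to visit a dense set, forcing $\mathrm{BR}^{\#}\setminus\mathrm{QW}$ requires Banach-upper returns but forbids positive-density returns, while the prescription of $M_{(\iota,x)}(F)$ must remain intact and lie in the chosen level $W$, $V\cap S$, $V$ or $V\cup S$. Balancing the hierarchy of length scales $n_k\ll m_k\ll n_{k+1}$ so that every one of these asymptotic constraints is met, \emph{and} so that the $\mathbf{g}$-almost product estimates still yield a Bowen-separated set of near-maximal cardinality, is the delicate technical heart of the argument; this is exactly where the extension from \cite{MR3963890,MR2322186} to free semigroup actions requires the skew-product reformulation of the $\mathbf{g}$-almost product property and the full-support $\mathbb{P}$-stationary hypothesis to do real work.
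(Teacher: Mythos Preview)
Your plan reconstructs the Moran--Pfister--Sullivan fractal directly in the free semigroup setting, but the paper takes a far shorter route: it shows that the skew product $F$ itself satisfies the hypotheses of Huang--Tian--Wang's Theorem~6.1 (the $2\mathbf{g}$-almost product property via Proposition~\ref{2g-almost product property}, and a full-support $F$-invariant measure $\mathbb{P}\times\mu$ via Proposition~\ref{invariant measure of random}), cites that theorem as a black box to obtain full entropy gaps for the $F$-level sets $\mathrm{QR}(F),\mathrm{BR}_j(F)$ on $\Sigma_m^+\times X$, and then transfers everything down to $G$ by the elementary fibrewise inclusions $(\iota,x)\in\mathrm{BR}_j(F)\setminus\mathrm{BR}_{j-1}(F)\Rightarrow x\in\mathrm{BR}_j(\iota,G)\setminus\mathrm{BR}_{j-1}(\iota,G)$ together with the entropy formula $\overline{Ch}_{\Sigma_m^+\times Z}(F)=\log m+\overline{Ch}_Z(G)$. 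No saturated-set construction is redone; the work was already done in \cite{MR3963890}.

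Beyond being laborious, your proposal contains a genuine error. You write that it suffices to ``place the constructed fractal inside the most restrictive stratum \dots\ and observe that its entropy transfers upward to each larger gap''. This is false: the gaps $M_j(Y)$ are \emph{differences} $Z_j\setminus Z_{j-1}$ intersected with $Y$, so a set contained in $\mathrm{QR}\cap\mathrm{BR}_1$ (or in $\mathrm{BR}_1\setminus\mathrm{QR}$) is automatically disjoint from $\mathrm{BR}_2\setminus\mathrm{BR}_1$, $\mathrm{BR}_3\setminus\mathrm{BR}_2$, etc. Each gap requires its own construction with a different prescribed configuration of supports $S_\nu$ versus $C_{(\iota,x)}$---that is the whole point of the $W,\,V\cap S,\,V,\,V\cup S$ hierarchy. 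Huang--Tian--Wang carry out five (or six) separate constructions, one per gap; if you insist on rebuilding the argument from scratch rather than citing it, you must do the same.
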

	\begin{theorem}\label{entropy of QW-1}
		Suppose that $G$ has the $\mathbf{g}$-almost product property and positively expansive, there exists a $\mathbb{P}$-stationary measure with full support on $X$ where $\mathbb{P}$ is a Bernoulli measure on $\Sigma_{m}^+$. Let $\varphi: X\to\mathbb{R}$ be a continuous function. If the skew product $F$ is not uniquely ergodic, then the unions of gaps of
		$$
		\left \{\emptyset, \mathrm{QR}(\iota,G)\cap\mathrm{QW}_1(\iota,G),\mathrm{QW}_1(\iota,G),\mathrm{QW}_2(\iota,G),\mathrm{QW}_3(\iota,G),\mathrm{QW}_4(\iota,G),\mathrm{QW}_5(\iota,G)\right \}
		$$
		with respect to 
		$\mathrm{Tran}(\iota,G)$, $R_\alpha(\iota,G)\cap\mathrm{Tran}(\iota,G)$ for all $\iota\in\Sigma_{m}^+$ have full upper capacity topological entropy of free semigroup action $G$, respectively.  
		If {$I_\alpha(\iota,G)$} is non-empty for some $\iota\in\Sigma_{m}^+$, similar arguments hold with respect to $I_\alpha (\iota,G)\cap\mathrm{Tran}(\iota,G)$.
	\end{theorem}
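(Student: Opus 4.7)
The plan is to adapt the argument for Theorem \ref{entropy of BR-1} to the $\mathrm{QW}$ setting, where the additional hypothesis of positive expansiveness compensates for the stronger recurrence requirement (positive upper density versus positive Banach upper density of return times). First, I would lift the problem to the skew product $(\Sigma_m^+\times X, F)$: the $\mathbf{g}$-almost product property of $G$ yields a corresponding specification-type property for $F$, the $\mathbb{P}$-stationary measure with full support produces an $F$-invariant measure whose $X$-marginal has full support, and positive expansiveness of $G$ transfers to $F$ (coupled with the already-expansive shift $\sigma$ on $\Sigma_m^+$).

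Next, the non-trivial direction is showing that each gap set carries full upper capacity topological entropy. Given $h^* < h_{\mathrm{top}}(G)$, I would choose finitely many ergodic $F$-invariant measures $\mu_1,\ldots,\mu_k$ (and, for the $R_\alpha$ or $I_\alpha$ intersections, ones satisfying the appropriate constraints on $\alpha$) whose convex combinations produce the required limit-measure profile. Using the $\mathbf{g}$-almost product property, I would concatenate long $(n_j,\varepsilon_j)$-generic orbit segments for these measures with controlled gaps, obtaining a Moran-like Cantor subset $K$ whose points have empirical measures with prescribed $M_{(\iota,x)}(F)$. Full support of the stationary measure is used to guarantee the transitivity conditions, and the construction is arranged so that the prescribed limit-measure set has the support structure required by each of $W$, $V\cap S$, $V$, $V\cup S$, or none, thereby placing points inside the correct level $\mathrm{QW}_i\setminus\mathrm{QW}_{i-1}$.

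Positive expansiveness enters at the delicate step of verifying that the constructed points genuinely lie in $\mathrm{QW}$ rather than only in $\mathrm{BR}$. With expansive constant $\delta$, two orbits cannot remain within $\delta$ for all large times unless they coincide, which, combined with a Bowen-type distortion estimate, lets one conclude that the lower density of $(\varepsilon,F)$-Bowen returns along the constructed orbit inherits a positive lower bound from the density of generic blocks in the concatenation. This is precisely what can fail without expansiveness, and it is why positive expansiveness is imposed here but not in Theorem \ref{entropy of BR-1}. The entropy lower bound on $K$ then follows from a standard separated-set estimate for the Moran construction, giving upper capacity topological entropy at least $h^*$, and letting $h^* \uparrow h_{\mathrm{top}}(G)$ completes the $\mathrm{Tran}$ and $R_\alpha\cap\mathrm{Tran}$ cases; the $I_\alpha\cap\mathrm{Tran}$ case is handled by alternating between two measures $\mu,\nu$ with $\alpha(\mu)\neq\alpha(\nu)$ along exponentially growing blocks so that the empirical averages of $\alpha$ oscillate.

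The main obstacle I anticipate is the simultaneous verification that the constructed points are in $\mathrm{QW}_i$ but outside $\mathrm{QW}_{i-1}$: both the positive lower density of return times and the prescribed support structure of the empirical-measure limit set must be realized together, and the $\mathbf{g}$-almost product gaps have to be short enough not to spoil either one. Balancing the gap lengths against the block lengths and the Bowen-ball scales $\varepsilon_j\downarrow 0$, while preserving the density of close returns (needed for $\mathrm{QW}$) and the distinctness of the empirical-measure limit set (needed for the $W$ versus $V\cap S$ versus $V$ distinctions), is the technical heart of the proof.
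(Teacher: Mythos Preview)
Your approach differs substantially from the paper's. The paper carries out no direct Moran-type construction here; it reduces everything to the skew product $F$ and invokes Theorem~7.1 of \cite{MR3963890} as a black box. Concretely: Lemma~3.4 of \cite{MR4200965} gives that $G$ is positively expansive iff $F$ is expansive; Proposition~\ref{2g-almost product property} transfers the $\mathbf{g}$-almost product property to $F$ (as a $2\mathbf{g}$-almost product property); Proposition~\ref{invariant measure of random} turns the $\mathbb{P}$-stationary measure into an $F$-invariant measure with full support. With all hypotheses verified for $F$, Theorem~7.1 of \cite{MR3963890} directly yields full entropy gaps for the chain $\{\emptyset,\mathrm{QR}(F)\cap\mathrm{QW}_1(F),\mathrm{QW}_1(F),\ldots,\mathrm{QW}_5(F)\}$ with respect to $\mathrm{Tran}(F)$, $R_\alpha(F)\cap\mathrm{Tran}(F)$, and $I_\alpha(F)\cap\mathrm{Tran}(F)$. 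One then checks the fiberwise set inclusions and applies Theorems~\ref{Topological entropy skew product} and~\ref{Topological upper capacity entropy skew product} exactly as in the proof of Theorem~\ref{entropy of BR-1}. That is the entire argument.

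Your direct-construction route is essentially a re-derivation of Theorem~7.1 of \cite{MR3963890} for $F$, which is feasible but far longer. One caution about your sketch: in the Huang--Tian--Wang framework, expansiveness together with the $\mathbf{g}$-almost product property is used to obtain the \emph{uniform separation property} (and hence upper semi-continuity of the entropy map and entropy density); it is these that drive the entropy lower bounds for the saturated-type sets one constructs. Membership in $\mathrm{QW}$ is handled via the measure-theoretic characterization $x\in\mathrm{QW}\Leftrightarrow x\in S_\mu$ for some $\mu\in M_x$, so the construction arranges that the point lies in the support of a prescribed limit measure; it is not established by a direct expansiveness-based estimate on return-time densities as you describe. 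If you pursue the direct approach, this is where the details would need to be reworked relative to your outline.
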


For $S\subseteq\mathbb{N}$, let $\overline{d}(S) $, $\underline{d}(S)$, $B^{*}(S)$ and $B_*(S)$ denote the upper density, lower density, Banach upper density and Banach lower density of $S$, respectively. Given $(\iota, x)\in\Sigma_{m}^+\times X$ and $\xi\in\{\,\overline{d}, \underline{d},B^{*}, B_{*} \}$, denote by $\omega_\xi\left ((\iota,x),F\right )$ the $\xi$-$\omega$-limit set of $(\iota, x)$. The notions will be given in more
detail later in Section \ref{Section-Preliminaries-2-1} (also see \cite{MR3963890, DongandTian1,DongandTian2}). If $\omega_{B_*}\left ((\iota,x),F\right )=\emptyset$ and $\omega_{B^*}\left ((\iota,x),F\right )=\omega\left ((\iota,x),F\right )$, then from \cite{DongandTian1} one has that $(\iota,x)$ satisfies only one of the following six cases:
\begin{enumerate}
	\item [(1)]
	$\emptyset=\omega_{B_{*}}\left ((\iota,x),F\right ) \subsetneq \omega_{\underline{d}}\left ((\iota,x),F\right )=\omega_{\overline{d}}\left ((\iota,x),F\right )=\omega_{B^{*}}\left ((\iota,x),F\right )=\omega\left ((\iota,x),F\right )$;
	\item [(2)]
	$\emptyset=\omega_{B_{*}}\left ((\iota,x),F\right ) \subsetneq \omega_{\underline{d}}\left ((\iota,x),F\right )=\omega_{\overline{d}}\left ((\iota,x),F\right ) \subsetneq \omega_{B^{*}}\left ((\iota,x),F\right )=\omega\left ((\iota,x),F\right )$;
	\item [(3)]
	$\emptyset=\omega_{B_{*}}\left ((\iota,x),F\right )=\omega_{\underline{d}}\left ((\iota,x),F\right ) \subsetneq \omega_{\overline{d}}\left ((\iota,x),F\right )=\omega_{B^{*}}\left ((\iota,x),F\right )=\omega\left ((\iota,x),F\right )$;
	\item [(4)]
	$\emptyset=\omega_{B_{*}}\left ((\iota,x),F\right ) \subsetneq \omega_{\underline{d}}\left ((\iota,x),F\right ) \subsetneq \omega_{\overline{d}}\left ((\iota,x),F\right )=\omega_{B^{*}}\left ((\iota,x),F\right )=\omega\left ((\iota,x),F\right )$;
	\item [(5)]
	$\emptyset=\omega_{B_{*}}\left ((\iota,x),F\right )=\omega_{\underline{d}}\left ((\iota,x),F\right ) \subsetneq \omega_{\overline{d}}\left ((\iota,x),F\right ) \subsetneq \omega_{B^{*}}\left ((\iota,x),F\right )=\omega\left ((\iota,x),F\right )$;
	\item [(6)]
	$\emptyset=\omega_{B_{*}}\left ((\iota,x),F\right ) \subsetneq \omega_{\underline{d}}\left ((\iota,x),F\right ) \subsetneq \omega_{\overline{d}}\left ((\iota,x),F\right ) \subsetneq \omega_{B^{*}}\left ((\iota,x),F\right )=\omega\left ((\iota,x),F\right )$.
\end{enumerate}
Consider the two sets as follows:
$$
T_j(\iota,G):=\left \{x\in \mathrm{Tran} (\iota,G):(\iota,x) \text{ satisfies Case } (j)\right \},
$$
$$
B_j(\iota,G):=\left \{x\in \mathrm{BR} (\iota,G):(\iota,x) \text{ satisfies Case } (j)\right \},
$$
where $j=1,\cdots,6$. Let
$$
T_j(G):=\cup_{\iota\in\Sigma_{m}^+}T_j(\iota,G),\quad B_j(G):=\cup_{\iota\in\Sigma_{m}^+}B_j(\iota,G).
$$

\begin{theorem}\label{entropy of omega limit set}
	Suppose that $G$ has the $\mathbf{g}$-almost product property, there exists a $\mathbb{P}$-stationary measure with full support on $X$ where $\mathbb{P}$ is a Bernoulli measure on $\Sigma_{m}^+$. If the skew product $F$ is not uniquely ergodic, then $T_j(G)\neq\emptyset$ and $B_j(G)\neq\emptyset$.
	Moreover, they all have full capacity topological entropy of free semigroup action $G$, that is,
	$$
	\overline{Ch}_{T_j(G)}(G)=\overline{Ch}_X(G)=h(G),
	$$
	$$
	\overline{Ch}_{B_j(G)}(G)=\overline{Ch}_X(G)=h(G),
	$$
	for all $j=1,\cdots,6$, where $\overline{Ch}_Z(G)$ denotes the upper capacity topological entropy on any subset $Z\subseteq X$ in the sense of \cite{MR3918203}, $h(G)$ denotes the topological entropy in the sense of Bufetov \cite{MR1681003}. If $Z=X$, we have $\overline{Ch}_{X}(G)=h(G)$ from Remark 5.1 of \cite{MR3918203}.
\end{theorem}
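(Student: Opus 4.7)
The plan is to translate each of the six cases into a statement about the supports of measures in $M_{(\iota,x)}(F)$, match each case with one of the six gaps in the chains appearing in Theorem \ref{entropy of BR-1}(2) and Theorem \ref{entropy of QW-1}, and then invoke those two theorems.

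First I would establish a dictionary between the density-type $\omega$-limit sets and the supports of measures in $M_{(\iota,x)}(F)$. Using a Portmanteau argument together with the definition of each density $\xi \in \{\overline{d}, \underline{d}, B^{*}, B_{*}\}$, one obtains $\omega_{B^{*}}((\iota,x),F) = C_{(\iota,x)}$, while $\omega_{\overline{d}}$, $\omega_{\underline{d}}$, $\omega_{B_{*}}$ coincide with the closures of the unions of $S_{\nu}$ over those measures $\nu \in M_{(\iota,x)}(F)$ that arise as weak$^{*}$ limits of empirical averages along subsequences of matching $\xi$-density. From this one reads off that the $W$-, $V$- and $S$-conditions correspond precisely to specific coincidences (or non-trivialities) among the four $\omega_\xi$-limit sets. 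Combined with the standing hypothesis $\omega_{B_{*}} = \emptyset$ and $\omega_{B^{*}} = \omega$, this converts the six cases into a partition of $\mathrm{Tran}(\iota,G)$ (respectively $\mathrm{BR}(\iota,G)$) along the $\mathrm{BR}_{k}$-filtration (respectively the $\mathrm{QW}_{k}$-filtration).

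Having identified Case $(j)$ with the $j$-th gap of
$\emptyset \subseteq \mathrm{QR}(\iota,G) \cap \mathrm{BR}_{1}(\iota,G) \subseteq \mathrm{BR}_{1}(\iota,G) \subseteq \cdots \subseteq \mathrm{BR}_{5}(\iota,G)$,
I would apply Theorem \ref{entropy of BR-1}(2) directly to conclude $\overline{Ch}_{T_{j}(G)}(G) = h(G)$ and $T_{j}(G) \neq \emptyset$, since each $T_{j}(\iota,G)$ equals the intersection of $\mathrm{Tran}(\iota,G)$ with one of the prescribed gap sets. For the $B_{j}(G)$ conclusion, the same saturating-generic-orbit construction underlying Theorems \ref{entropy of BR-1} and \ref{entropy of QW-1} in fact produces orbits that lie simultaneously in $\mathrm{BR}(\iota,G)$ and exhibit the prescribed case structure (via the $\mathbf{g}$-almost product property one can glue together pieces approximating chosen measures with controlled return frequencies, and Banach upper recurrence along the resulting orbit follows automatically); applying this variant of the argument yields $\overline{Ch}_{B_{j}(G)}(G) = h(G)$ together with $B_{j}(G) \neq \emptyset$.

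The principal obstacle lies in the dictionary step. The density-based characterization of $\omega_{\xi}$ and the measure-support characterization of the $\mathrm{BR}_{k}$ and $\mathrm{QW}_{k}$ levels are not tautologically equivalent: one must argue that the return times of $F^{n}(\iota,x)$ to each neighborhood of a point $y$ have positive $\xi$-density if and only if $y$ lies in the support of some weak$^{*}$ limit measure extracted along a subsequence of matching $\xi$-density. This translation, implicit in the work of Dong--Tian and Huang--Tian--Wang, must here be adapted to the skew product $F$ on $\Sigma_{m}^{+} \times X$, and one must verify that the existence of a $\mathbb{P}$-stationary measure of full support, combined with the Bernoulli nature of $\mathbb{P}$ and the $\mathbf{g}$-almost product property, ensures that accumulation points of empirical measures encode genuine $F$-invariant information on both the symbolic and spatial coordinates in a way compatible with the case-by-case recurrence analysis.
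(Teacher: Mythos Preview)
Your approach is genuinely different from the paper's, and it takes a substantial detour that the paper avoids entirely.

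The paper's proof is a direct lift-and-project argument. It verifies that the skew product $F$ inherits the $2\mathbf{g}$-almost product property (Proposition \ref{2g-almost product property}) and has an invariant measure of full support (Proposition \ref{invariant measure of random}), then applies Theorem 1.3 of \cite{MR3963890} \emph{to $F$ itself}. That theorem already gives $T_j(F)\neq\emptyset$, $B_j(F)\neq\emptyset$ with full Bowen entropy for $F$. The only remaining work is the elementary inclusion $T_j(F)\subseteq \Sigma_m^+\times T_j(G)$ (and likewise for $B_j$), followed by the entropy comparison $h_{T_j(F)}(F)\le \overline{Ch}_{\Sigma_m^+\times T_j(G)}(F)=\log m+\overline{Ch}_{T_j(G)}(G)$ via Theorems \ref{Topological entropy skew product} and \ref{Topological upper capacity entropy skew product}. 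No dictionary, no case-by-case matching, no reconstruction of orbits.

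Your route, by contrast, attempts to reduce Theorem \ref{entropy of omega limit set} to Theorems \ref{entropy of BR-1} and \ref{entropy of QW-1}. There are two concrete problems. First, the claimed bijection between Cases $(1)$--$(6)$ and the six gaps of $\{\emptyset,\mathrm{QR}\cap\mathrm{BR}_1,\mathrm{BR}_1,\dots,\mathrm{BR}_5\}$ is not established; the $\omega_\xi$ sets and the $W/V/S$ conditions are related but not tautologically interchangeable, and you would essentially be reproving the structural lemmas of Dong--Tian inside the skew product. Second, and more seriously, your argument for $B_j(G)$ does not go through Theorems \ref{entropy of BR-1} or \ref{entropy of QW-1} at all: those theorems concern gap sets intersected with $\mathrm{Tran}(\iota,G)$, whereas $B_j(\iota,G)$ sits inside $\mathrm{BR}(\iota,G)$. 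You fall back on ``the same saturating-generic-orbit construction,'' but that is precisely what the cited Theorem 1.3 of \cite{MR3963890} packages for you --- and the paper simply cites it rather than redoing it. Note also that Theorem \ref{entropy of QW-1} requires positive expansiveness, which is \emph{not} assumed in Theorem \ref{entropy of omega limit set}, so even an informal appeal to its underlying construction is suspect here.

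In short: the paper treats the $T_j$ and $B_j$ sets as black boxes whose full-entropy property for single maps is already known, and transfers that result through the skew product. Your plan tries to rebuild the black box from the paper's other theorems, which is both harder and, as written, incomplete.
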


\section{Preliminaries}\label{Section-Preliminaries-2}

\subsection{Some notions} \label{Section-Preliminaries-2-1}
Let $(X,d)$ be a compact metric space and $f$ be a continuous map on $X$.
For $S\subseteq \mathbb{N}$, the upper density and the Banach upper density of $S$ are defined by
$$
\overline{d}(S):=\limsup_{n \rightarrow \infty} \frac{\sharp\left \{S \cap\{0,1, \cdots, n-1\}\right \}}{n},\quad B^{*}(S):=\limsup _{\sharp I \rightarrow \infty} \frac{\sharp \left \{S \cap I\right \}}{\sharp I},
$$
respectively, where $\sharp Y$ denotes the cardinality of the set $Y$ and $I \subseteq \mathbb{N}$ is taken from finite continuous integer intervals. Similarly, one can define 
the lower density  and the Banach lower density of $S$, denoted as $\underline{d}(S)$ and $B_{*}(S)$, respectively. Let $U\subset X$ be a nonempty open set and $x\in X$. Define the set of visiting time,
$$
N(x,U):=\left \{ n\ge 0: f^n(x)\in U\right \}.
$$

Recall that 
{a point $x\in X$ is called to be Banach upper recurrent,}
if for any $\varepsilon>0$, the set of visiting time $N(x,B(x,\varepsilon))$ has a positive Banach upper density where $B(x,\varepsilon)$ denotes the ball centered at $x$ with radius $\varepsilon$. Similarly, one can call 
{a point $x\in X$ upper recurrent,}
if for any $\varepsilon>0$, the set of visiting time $N(x,B(x,\varepsilon))$ has a positive upper density.  Let us denote by $\mathrm{BR}(f)$ and $\mathrm{QW}(f)$ the sets of the Banach upper recurrent points and the upper recurrent points of $f$, respectively. It is immediate that 
$$
\mathrm{QW}(f)\subseteq \mathrm{BR}(f).
$$
A {point} $x\in X$ is called transitive {if its orbit $\{x,f(x),f^2(x),\cdots\}$} is dense in $X$. Let us denote by $\mathrm{Tran}(f)$ the set of transitive points of $f$.

{We recall} that several concepts were introduced in \cite{MR3963890}. For $x \in X$ and $\xi\in\{\,\overline{d}, \underline{d}, B_{*}, B^{*} \}$, a point $y \in X$ is called $x$-$\xi$-accessible, if for any $\varepsilon>0$, the set of visiting time $N\left(x, B(y,\varepsilon)\right)$ has positive density with respect to $\xi$. Let
$$
\omega_{\xi}(x):=\{y \in X: y \text { is } x\text{-}\xi\text{-accessible }\}.
$$
For convenience, it is called the $\xi$-$\omega$-limit set of $x$.

The set of invariant measures under $f$ will be denote by $\mathcal{M}(X,f)$. For $\mu\in\mathcal{M}(X,f)$, a point $x \in X$ is $\mu$-generic if
	$$
	\lim _{n \rightarrow \infty} \frac{1}{n} \sum_{j=0}^{n-1} \delta_{f^{j}(x)}=\mu
	$$
where $\delta_{y}$ denotes the Dirac measure on $y$. We will use $G_\mu(f)$ to denote the set of $\mu$-generic points. Let $\mathrm{QR}(f):=\bigcup_{\mu\in\mathcal{M}(X,f)}G_\mu(f)$. The points in $\mathrm{QR}(f)$ are called quasiregular points of $f$.

\subsection{The topological entropy and others concepts of free semigroup actions} \label{the concepts of free semigroup actions}
In this paper, we use the topological entropy and upper capacity topological entropy of free semigroup actions {defined by \cite{MR1681003} and \cite{MR3918203}, respectively.} Let $(X,d)$ be a compact metric space and $G$ the free semigroup action on $X$ generated by $f_0,\cdots,f_{m-1}$.  For convenience, we first recall the notion of words.

Let $F_m^+$ be the set of all finite words of symbols $0,1,\cdots,m-1$.  For any $w\in F_m^+$, $\lvert w\rvert$ stands for the length of $w$, that is, the number of symbols in $w$. Obviously, $F^+_m$ with respect to the law of composition is a free semigroup with $m$ generators. We write $w'\leq w$ if there exists a word $w''\in F^+_m$ such that $w=w''w'$. Remark that $\emptyset\in F_m^+$ and $\emptyset\leq w$. For $w=i_0i_1\cdots i_k\in F^+_m$, denote $\overline{w}=i_k\cdots i_1i_0$.

Denote by $\Sigma^+_m$ the set of all one-side infinite sequences of symbols $\{0,1,\cdots,m-1\}$, that is, 

$$\Sigma^+_m=\left\{\iota=(i_0,i_1,\cdots)\,  : \,i_k\in \{0,1,\cdots,m-1\},\: k\in\mathbb{N}\right\}.
$$
{The metric on  $\Sigma^+_m$ is given by 
	$$d'(\iota,\iota'):=2^{-j},\quad j=\inf\{n\, :  \,i_n\neq i'_n\}.
	$$}It is easy to check that $\Sigma^+_m$ is compact with respect to this metric. The shift $\sigma:\Sigma^+_m\to \Sigma^+_m $ is given by the formula, for each $\iota=(i_0,i_1,\cdots)\in\Sigma^+_m$,
$$
\sigma(\iota)=(i_1,i_2,\cdots).
$$

Suppose that $\iota\in\Sigma^+_m$, and $a,b\in \mathbb{N}$ with $a\leq b$. We write $\iota\lvert_{[a,b]}=w$ if $w=i_ai_{a+1}\cdots i_b$.

To each $w\in F^+_m$, {$w=i_{0}\cdots i_{k-1}$, let us write $f_w=f_{i_{0}}\circ\cdots \circ f_{i_{k-1}}$} if $k>0$,  and $f_w=\mathrm{Id}$ if $k=0$, where $\mathrm{Id}$ is the {identity map}. Obviously, $f_{ww'}=f_wf_{w'}$.  

For $w\in F^+_m$, we assign a metric $d_w$ on $X$ by setting
$$
d_w(x_1,x_2)=\max_{w^{\prime} \leq \overline{w}}d\left (f_{w^{\prime}}(x_1),f_{w^{\prime}}(x_2)\right ).
$$
Given a number $\delta>0$ and a point $x \in X$, define the $(w, \delta)$-Bowen ball at $x$ by
$$
B_{w}(x, \delta):=\left\{y \in X:{d_w\left(x, y\right) < \delta}\right\} .
$$

Recall that the positively expansive of the free semigroup actions means that if there exists $\delta>0$, such that any $x,y\in X$ with $x\neq y$, for any $\iota\in\Sigma_{m}^+$ there exists $n\ge 1$ satisfying $d\left (f_{\overline{\iota \lvert_{[0,n-1]}}}(x),f_{\overline{\iota \lvert_{[0,n-1]}}}(y)\right )\ge \delta$, which was introduced by Zhu and Ma \cite{MR4200965}.

The specification property of free semigroup actions was introduced by Rodrigues and Varandas \cite{MR3503951}. We say that $G$ has the specification property if for any $\varepsilon>0$, there exists $\mathfrak{p}(\varepsilon)>0$, such that for any $k>0$, any points $x_{1},\cdots, x_{k} \in X$, any positive integers $n_{1}, \cdots, n_{k}$,  any $p_{1},\cdots, p_{k} \geq \mathfrak{p}(\varepsilon)$, any $w_{(p_{j})} \in F_{m}^{+}$ with $\lvert w_{(p_{j})}\rvert =p_{j}$,  $w_{(n_{j})} \in F_{m}^{+}$ with $\lvert w_{(n_{j})}\rvert =n_{j}, 1 \leq j \leq k$, one has
$$
	B_{w_{(n_1)}}\left (x_1,\varepsilon\right )\cap\left (\bigcap_{j=2}^k {f^{-1}_{\overline{w_{(p_{j-1})}}\, \overline{w_{(n_{j-1})}}\cdots\overline{w_{(p_1)}}\, \overline{w_{(n_1)}}}}B_{w_{(n_j)}}\left (x_j,\varepsilon\right )\right )\neq\emptyset.
	$$If $m=1$,  the specification property of free semigroup actions coincides with the classical definition introduced by Bowen \cite{MR282372}.

{We recall the definition of  topological entropy} for free semigroup actions introduced by \cite{MR1681003}.
A subset $K$ of $X$ is called a $(w, \varepsilon, G)$-separated subset if, for any $x_{1}, x_{2} \in K$ with $x_{1} \neq x_{2}$, one has $d_{w}\left(x_{1}, x_{2}\right) \geq \varepsilon$. The maximum cardinality of a $(w, \varepsilon, G)$-separated subset of $X$ is denoted by $N(w, \varepsilon, G)$.
The topological entropy of free semigroup actions is defined by the formula
$$
h (G):=\lim _{\varepsilon \rightarrow 0} \limsup_{n\rightarrow\infty} \frac{1}{n} \log \left (\frac{1}{m^{n}} \sum_{\lvert w\rvert =n} N(w, \varepsilon, G)\right ).
$$
\begin{remark}
	If $m=1$,  this definition coincides with the topological entropy of a single map defined by \cite{MR175106}. For more information, see Chapter 7 of  \cite{MR648108}.
\end{remark}

{The dynamical systems given by free semigroup action}
have a strong connection with skew product which has been analyzed to obtain properties of free semigroup actions through fiber associated with the skew product (see for instance \cite{MR4200965, MR3784991}). Recall that the skew product transformation is given by as follows:
$$
F:\Sigma^+ _m \times X\to\Sigma^+ _m \times X,\:\, (\iota,x)\mapsto \big(\sigma(\iota),f_{i_0}(x)\big),
$$
where $\iota=(i_0, i_1,\cdots)$ and $\sigma$ is the shift map of $\Sigma^+ _m $. {The metric $D$ on} $\Sigma^+ _m \times X$ is given by the formula 
$$
D\left (\left (\iota,x\right ),\left (\iota',x'\right )\right ):=\max\left \{d'\left (\iota,\iota'\right ),d\left (x,x'\right )\right \}.
$$

\begin{theorem}
	\label{Topological entropy skew product}
	(\cite{MR1681003},Theorem 1) Topological entropy of the skew product transformation $F$ satisfies
	$$
	h(F)=\log m +h(G),
	$$
	where $h(F)$ denotes the topological entropy of $F$.
\end{theorem}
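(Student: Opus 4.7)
The plan is to compare the Bowen $(n,\varepsilon)$-separated counts for the skew product $F$ on $\Sigma_m^+\times X$ directly with the weighted sum $\sum_{|w|=n}N(w,\varepsilon,G)$ that appears in Bufetov's definition of $h(G)$. The extra $\log m$ will come entirely from resolving the symbolic coordinate under the shift $\sigma$, which carries topological entropy $\log m$ because there are $m^n$ cylinders of length $n$.

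First I would unpack the Bowen metric $D_n^F$. From the formula $F^k(\iota,x)=\bigl(\sigma^k(\iota),\,f_{\overline{\iota|_{[0,k-1]}}}(x)\bigr)$ together with the definition of $D$ on $\Sigma_m^+\times X$, one obtains
$$D_n^F\bigl((\iota,x),(\iota',x')\bigr)=\max_{0\le k\le n-1}\max\{d'(\sigma^k\iota,\sigma^k\iota'),\,d(f_{\overline{\iota|_{[0,k-1]}}}(x),f_{\overline{\iota'|_{[0,k-1]}}}(x'))\}.$$
When $\iota$ and $\iota'$ share the prefix $w=\iota|_{[0,n-1]}$, the inner $d$-maximum coincides with $d_w(x,x')$ up to a boundary term that shifts $n$ by $1$ and is negligible in the entropy limit. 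Writing $\varepsilon=2^{-l}$, the $d'$-part of $D_n^F$ exceeds $\varepsilon$ iff $\iota$ and $\iota'$ disagree somewhere in positions $0,\ldots,n+l-2$; in particular, pairs whose first coordinates lie in different cylinders of length $n+l-1$ are automatically $(n,\varepsilon)$-separated for $F$.

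For the lower bound, for each $w$ with $|w|=n$ I pick a maximal $(w,\varepsilon,G)$-separated set $K_w\subseteq X$ together with some symbolic extension $\iota_w$ satisfying $\iota_w|_{[0,n-1]}=w$. The collection $\{(\iota_w,x):|w|=n,\,x\in K_w\}$ is $(n,\varepsilon)$-separated for $F$: distinct points in a common fibre are $d_w$-separated, while for $w\neq w'$ the sequences $\iota_w,\iota_{w'}$ first disagree at some index $j\le n-1$, and an appropriate shift $\sigma^{\max(0,j-l+1)}$ forces a $d'$-gap of size at least $\varepsilon$ within the first $n$ iterates. Denoting by $N^F(n,\varepsilon)$ the maximum $(n,\varepsilon)$-separated cardinality for $F$, this yields
$$N^F(n,\varepsilon)\ge\sum_{|w|=n}N(w,\varepsilon,G).$$
For the upper bound, I partition any $(n,\varepsilon)$-separated set for $F$ according to the $(n+l-1)$-cylinder of its first coordinate: there are at most $m^{n+l-1}$ such cylinders, and two points lying in a common cylinder cannot be separated symbolically, so they must be $d_w$-separated in $X$, where $w$ is their length-$n$ prefix. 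Hence
$$N^F(n,\varepsilon)\le m^{l-1}\sum_{|w|=n}N(w,\varepsilon,G).$$

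Combining the two inequalities, applying $\tfrac1n\log$, using $\log m^n=n\log m$ to split
$\sum_{|w|=n}N(w,\varepsilon,G)=m^n\cdot\tfrac{1}{m^n}\sum_{|w|=n}N(w,\varepsilon,G)$,
and finally letting $n\to\infty$ and then $\varepsilon\to 0$, I obtain
$$h(F)=\lim_{\varepsilon\to 0}\limsup_{n\to\infty}\tfrac{1}{n}\log\sum_{|w|=n}N(w,\varepsilon,G)=\log m+h(G).$$
The main technical subtlety I expect is decoupling the parameters $\varepsilon=2^{-l}$ and $n$: the $l$-step delay required to witness symbolic disagreement and the overcount $m^{l-1}$ in the upper bound both have to be shown to vanish after dividing by $n$ and passing to the double limit, and the off-by-one discrepancy between the indexing of $D_n^F$ and that of $d_w$ must be absorbed similarly. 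Once this uniformity is secured the proof reduces to a straightforward dictionary between $D_n^F$ and the family $\{d_w\}_{|w|=n}$ of free-semigroup Bowen metrics.
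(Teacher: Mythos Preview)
The paper does not supply its own proof of this statement: it is quoted verbatim as Theorem~1 of Bufetov \cite{MR1681003} and used as a black box throughout. So there is no ``paper's proof'' to compare against; you have reconstructed an argument for a cited result.

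Your argument is essentially the standard one and is sound. Two small points: for the lower bound you do not need the refined shift index $\max(0,j-l+1)$ --- if the length-$n$ prefixes of $\iota_w$ and $\iota_{w'}$ first differ at position $j\le n-1$, then $\sigma^{j}\iota_w$ and $\sigma^{j}\iota_{w'}$ already differ at position $0$, so $d'(\sigma^{j}\iota_w,\sigma^{j}\iota_{w'})=1\ge\varepsilon$; and in the upper bound the cylinder length should be taken as $n+l$ (rather than $n+l-1$) if you want the symbolic distance to be strictly below $\varepsilon=2^{-l}$, otherwise you may only get $\le\varepsilon$ and a borderline pair could fail to be $d_w$-separated. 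Both are exactly the kind of off-by-one you flag at the end and are harmless after dividing by $n$ and sending $n\to\infty$.
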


Now, let us recall the  topological entropy and upper capacity topological entropy of free semigroup actions for non-compact sets defined by \cite{MR3918203}.  Fixed $\delta>0$, we define the collection of subsets
$$
\mathcal{F}:=\left \{B_{w}(x, \delta): \,x\in X,\, w\in F^+_m, \,\lvert w\rvert =n \text{ and } n\in\mathbb{N}\right \}.
$$
Given subset $Z\subset X$,  we define,  for $\gamma\ge 0$, $N>0$ and $w\in F_m^+$ with $\lvert w\rvert =N$,
$$
{M}_{w}(Z, \gamma, \delta, N):=\inf _{\mathcal{G}_{w}}\left \{\sum_{B_{w'}(x, \delta) \in \mathcal{G}_{w}} \exp {\left (-\gamma \cdot\left(\lvert w'\rvert +1\right)\right )}\right\},
$$
where the infimum is taken over all finite or countable subcollections $\mathcal{G}_w\subseteq\mathcal{F}$ covering $Z$ (i.e. for any $B_{w'}(x,\delta)\in\mathcal{G}_w$, $\overline{w}\le \overline{w'}$ and $\bigcup_{B_{w'}(x,\delta)\in\mathcal{G}_w}B_{w'}(x,\delta)\supseteq Z$). Let
$$
{M}(Z, \gamma, \delta, N):=\frac{1}{m^{N}} \sum_{\lvert w\rvert =N} {M}_{w}(Z, \gamma, \delta, N).
$$
It is easy to verify that the function ${M}(Z, \gamma, \delta, N)$ is non-decreasing as $N$ increases. Therefore, there exists the limit
$$
{m}(Z, \gamma, \delta)=\lim _{N \rightarrow \infty} {M}(Z, \gamma, \delta, N).
$$

Furthermore, we can define
$$
{R}_{w}(Z, \gamma, \delta, N) :=\inf _{\mathcal{G}_{w}}\left \{\sum_{B_{w}(x, \delta) \in \mathcal{G}_{w}} \exp{\left ( -\gamma \cdot(N+1)\right )}\right \},
$$
where the infimum is taken over all finite or countable subcollections $\mathcal{G}_w\subseteq\mathcal{F}$ covering $Z$ {and the length word} correspond to every ball in $\mathcal{G}_w$ are all equal to $N$. Let
$$
{R}(Z, \gamma, \delta, N) :=\frac{1}{m^{N}} \sum_{\lvert w\rvert=N} {R}_{w}(Z, \gamma, \delta, N) ,
$$
and set
$$
\overline{r}(Z, \gamma, \delta):=\limsup _{N \rightarrow \infty} {R}(Z, \gamma, \delta, N).
$$

The critical values $h_Z(G,\delta)$ and $\overline{Ch}_Z(G,\delta)$ are defined as
$$
\begin{aligned}
	h_Z(G,\delta)&:=\inf\{\gamma:{m}(Z,\gamma,\delta)=0\}=\sup \{\gamma:{m}(Z,\gamma,\delta)=\infty\},\\
	\overline{Ch}_{Z}(G,\delta)&:=\inf \{\gamma:\,\, \overline{r}(Z, \gamma, \delta)=0\}=\sup \{\gamma: \,\,\overline{r}(Z, \gamma, \delta)=\infty\}.
\end{aligned}
$$ 
The topological entropy and upper capacity topological entropy of $Z$ of free semigroup action $G$ are then defined as 
$$
\begin{aligned}
	h_Z(G)&:=\lim _{\delta \rightarrow 0}h_{Z}(G,\delta),\\
	\overline{Ch}_{Z}(G)&:=\lim _{\delta \rightarrow 0} \overline{Ch}_{Z}(G,\delta).
\end{aligned}
$$

\begin{remark}
	Let $f: X \to X$ be a continuous transformation and $G$ the free semigroup generated by the map $f$. Then $h_{Z}(G)=h_{Z}(f)$,  $\overline{C h}_{Z}(G)=\overline{C h}_{Z}(f)$, for any set $Z \subset X$, where $h_{Z}(f)$ and $\overline{C h}_{Z}(f)$ are the topological entropy and upper capacity topological entropy defined by Pesin \cite{MR1489237}. If $Z=X$,  then $h(G)=h(f)=h_X(f)=\overline{Ch}_X(f)$, i.e., the classical topological entropy defined by Adler et al \cite{MR175106}.
\end{remark}

In \cite{MR3918203}, 
{they proved that the upper} capacity topological entropy of the skew product $F$ satisfies the following result for any subset $Z\subseteq X$.
\begin{theorem}(\cite{MR3918203}, Theorem 5.1)
	\label{Topological upper capacity entropy skew product}
	For any subset $Z \subset X$, then
	$$
	\overline{Ch}_{\Sigma_{m}^+ \times Z}(F)=\log m+\overline{Ch}_{Z}(G) .
	$$
\end{theorem}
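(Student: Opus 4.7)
The plan is to compare the Bowen balls for the single transformation $F$ on $\Sigma_m^+\times X$ with the word-indexed Bowen balls for the semigroup action $G$ on $X$, and to push this comparison through the Pesin-style quantities $R$, $\overline{r}$ and their $F$-counterparts (which I write as $R^F$, $\overline{r}^F$) underlying the upper capacity entropy.

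First I would establish the product structure of the $F$-Bowen balls. From the iteration
\begin{equation*}
F^n(\iota,x) = (\sigma^n(\iota),\, f_{\overline{\iota\lvert_{[0,n-1]}}}(x))
\end{equation*}
and the metric $D((\iota,x),(\iota',x')) = \max\{d'(\iota,\iota'),d(x,x')\}$, given $\delta>0$ I choose $N_\delta\in\mathbb{N}$ with $2^{-N_\delta}\le\delta$. The constraint $d'(\sigma^j\iota,\sigma^j\iota')<\delta$ for $0\le j<N$ forces $\iota$ and $\iota'$ to agree on $[0,N+N_\delta-1]$; under that agreement, the constraints on the $X$-coordinates for $0\le j<N$ are exactly the defining inequalities of $x'\in B_{w}(x,\delta)$ with $w=\iota\lvert_{[0,N-1]}$, up to a harmless omission of the suffix of length $N$ in the set $\{w':w'\le\overline{w}\}$. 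Thus the $F$-Bowen ball factors as
\begin{equation*}
B^F_N((\iota,x),\delta) = [\iota\lvert_{[0,N+N_\delta-1]}] \times B_{\iota\lvert_{[0,N-1]}}(x,\delta),
\end{equation*}
where $[v]$ denotes the cylinder of sequences beginning with $v$.

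Next I would convert this factorisation into an exact count. Because cylinders of length $N+N_\delta$ are pairwise disjoint and the projection of each $F$-Bowen ball onto $\Sigma_m^+$ is one such cylinder, every cover of $\Sigma_m^+\times Z$ by level-$N$ $F$-Bowen balls splits along this cylinder partition; the balls living over $[v]$ are precisely $[v]\times B_{v\lvert_{[0,N-1]}}(x,\delta)$, so covering $[v]\times Z$ requires exactly $N_{v\lvert_{[0,N-1]}}(Z,\delta)$ of them, where $N_w(Z,\delta)$ is the minimum cardinality of a $(w,\delta,G)$-Bowen cover of $Z$. Summing over the $m^{N_\delta}$ extensions of each length-$N$ prefix,
\begin{equation*}
R^F(\Sigma_m^+\times Z,\gamma,\delta,N) = m^{N_\delta}\sum_{\lvert w\rvert=N} N_w(Z,\delta)\exp(-\gamma(N+1)).
\end{equation*}
Recalling that $R(Z,\gamma',\delta,N) = \frac{1}{m^N}\sum_{\lvert w\rvert=N}N_w(Z,\delta)\exp(-\gamma'(N+1))$ and absorbing a factor of $\exp(-(N+1)\log m)$ into the exponent yields
\begin{equation*}
R^F(\Sigma_m^+\times Z,\gamma'+\log m,\delta,N) = m^{N_\delta-1}\, R(Z,\gamma',\delta,N),
\end{equation*}
whence $\overline{r}^F(\Sigma_m^+\times Z,\gamma'+\log m,\delta) = m^{N_\delta-1}\,\overline{r}(Z,\gamma',\delta)$. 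The positive multiplicative factor makes the critical values coincide up to the shift $\log m$, so $\overline{Ch}_{\Sigma_m^+\times Z}(F,\delta) = \overline{Ch}_Z(G,\delta)+\log m$, and sending $\delta\to 0$ gives the theorem.

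The main obstacle is the cylinder-splitting step. The clean equality rests on the fact that $F$-Bowen balls are genuinely product sets whose $\Sigma_m^+$-projections are cylinders, so that no ``diagonal'' covering can beat the naive product cover; the formal argument is that any ball $B^F_N$ is entirely contained in a single cylinder of length $N+N_\delta$, and cylinders are disjoint, so the count is forced. In addition, the off-by-one shifts among $N$, $N+N_\delta$, and word lengths (together with the fact that the suffixes of $\overline{w}$ used to define $d_w$ include the full word but the Bowen condition for $F$ stops at time $N-1$) require careful calibration, but since $N_\delta$ depends only on $\delta$ the factor $m^{N_\delta-1}$ is absorbed in $\limsup_N$ and disappears in the final $\delta\to 0$ limit.
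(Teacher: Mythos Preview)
The paper does not prove this theorem; it is quoted verbatim from \cite{MR3918203} (Theorem~5.1 of Ju, Ma and Wang) and used throughout as a black box, so there is no in-paper argument to compare your attempt against.

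For what it is worth, your approach is the natural one and is correct in outline: the $F$-Bowen ball of level $N$ factors as a cylinder of length roughly $N+N_\delta$ in $\Sigma_m^+$ times a $w$-Bowen ball in $X$ with $w=\iota\lvert_{[0,N-1]}$; disjointness of cylinders forces any minimal $F$-cover of $\Sigma_m^+\times Z$ to split along the cylinder partition, which yields the exact identity
\[
R^F(\Sigma_m^+\times Z,\gamma'+\log m,\delta,N)=m^{N_\delta-1}\,R(Z,\gamma',\delta,N),
\]
and the $\delta$-dependent constant $m^{N_\delta-1}$ disappears in the critical-value computation. The only places requiring care are the ones you already flag: the precise cylinder length depends on whether the metric inequality $d'(\sigma^j\iota,\sigma^j\iota')<\delta$ is strict (it is, in this paper) and on the convention for the range of $j$ in the $F$-Bowen ball, and the index set $\{w'\le\overline{w}\}$ has $|w|+1$ elements rather than $|w|$. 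Both discrepancies are $O(1)$ in $N$ and vanish in the $\limsup$, exactly as you say.
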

\begin{remark}
	If $Z=X,$ the authors of  \cite{MR3918203} proved that $h(G)=\overline{Ch}_X(G)$. Hence, we have
	$$
	\overline{Ch}_{\Sigma_{m}^+ \times X}(F)=\log m+\overline{Ch}_{X}(G) .
	$$
	Since $h(F)=h_{\Sigma_{m}^+ \times X}(F)=\overline{Ch}_{\Sigma_{m}^+ \times X}(F)$, then Theorem \ref{Topological entropy skew product} can be restated as
	$$
	h_{\Sigma_{m}^+ \times X}(F)=\log m+h(G) .
	$$
\end{remark}

\subsection{Stationary measure}\label{Stationary measure}
Let $\mathbf{p}:=(p_0,\cdots,p_{m-1})$ be a probability vector with non-zero entries (i.e., $p_j>0$ for each $j$ and $\sum_{j=0}^{m-1}p_j=1$). The Bernoulli measure $\mathbb{P}$ on $\Sigma_{m}^+$ generated {by the probability vector}
$\mathbf{p}$ is $\sigma$-invariant and ergodic. Given a point $x\in X$ and measurable set $A\subseteq X$, the transition probabilities {are defined by the formula}
$$
\mathcal{P}(x, A)=\int \chi_{A}\left(f_{j}(x)\right) \mathbf{p}(\mathrm{d} j),
$$
where $\chi_{A}$ denotes the indicator map corresponding with the set $A$. Let $\mathcal{M}(X)$ denote the set of all probability measures on $X$. For every probability measure $\mu\in \mathcal{M}(X)$, the adjoint operator $\mathcal{P}^{*}$ is defined by the following way,
$$
\begin{aligned}
	\mathcal{P}^{*} \mu(A)=\int \mathcal{P}\left(x, {A}\right) d \mu(x)=\int \int \chi_{A}\left(f_{j}(x)\right)\mathbf{p}(\mathrm{d} j) \mu(\mathrm{d}x)
	=\sum_{j=0}^{m-1}p_j\mu\left (f_j^{-1}A\right ).
\end{aligned}
$$
A Borel probability measure $\mu\in\mathcal{M}(X)$ is said to be $\mathbb{P}$-stationary if, $\mathcal{P}^{*} \mu=\mu$.

As $X$ is a compact metric space, the set of $\mathbb{P}$-stationary probability measures is a nonempty  compact convex set with respect to the weak$^{*}$ topology for every $\mathbb{P}$. Its extreme points are called $\mathbb{P}$-ergodic. For more information, see \cite{MR884892}. When convenient, we will use the following criterium :
\begin{proposition} \label{invariant measure of random}
	(\cite{MR884892}, Lemma I.2.3)
	Let $\mathbb{P}$ be a Bernoulli measure on $\Sigma_{m}^+$, and $\mu$ be a probability measure on $X$, then 
	\begin{itemize}
		\item [(1)] \label{invariant} $\mu$ is $\mathbb{P}$-stationary if and only if the product probability measure $\mathbb{P} \times \mu$ is $F$-invariant.
		\item [(2)] $\mu$ is $\mathbb{P}$-stationary and ergodic if and only if the product probability measure $\mathbb{P} \times \mu$ is $F$-invariant and ergodic.
	\end{itemize}
\end{proposition}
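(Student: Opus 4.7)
The plan is to verify both parts by testing $F$-invariance (and then $F$-ergodicity) of $\mathbb{P}\times\mu$ against a sufficiently rich class of functions, and reading off the resulting identity as the defining property of $\mathbb{P}$-stationarity (respectively $\mathbb{P}$-ergodicity).

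For Part (1), I would fix a cylinder $[w]\subset\Sigma_m^+$ and a bounded measurable $g:X\to\mathbb{R}$, and evaluate the $F$-invariance identity on the product function $\phi(\iota,x)=\mathbf{1}_{[w]}(\iota)g(x)$. Using $F(\iota,x)=(\sigma(\iota),f_{i_0}(x))$ together with the product Bernoulli structure of $\mathbb{P}$, the right-hand side factorizes as
\[
\int\phi\circ F\,d(\mathbb{P}\times\mu)=\mathbb{P}([w])\sum_{j=0}^{m-1}p_j\int g\circ f_j\,d\mu=\mathbb{P}([w])\int\mathcal{P}g\,d\mu,
\]
whereas $\int\phi\,d(\mathbb{P}\times\mu)=\mathbb{P}([w])\int g\,d\mu$. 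Equality for every cylinder $[w]$ and every $g$ collapses to $\int\mathcal{P}g\,d\mu=\int g\,d\mu$, that is, $\mathcal{P}^{*}\mu=\mu$. Since finite linear combinations of such products are dense in $C(\Sigma_m^+\times X)$, this characterization is equivalent to $F$-invariance of $\mathbb{P}\times\mu$.

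Part (2) now splits into two implications; the invariance half is Part (1), so only the ergodicity equivalence is new. The easy implication is the contrapositive of $(\Rightarrow)$: if $\mu$ is not $\mathbb{P}$-ergodic then there is $A\subset X$ with $0<\mu(A)<1$ and $\mathcal{P}\mathbf{1}_A=\mathbf{1}_A$ $\mu$-almost everywhere; since every $p_j>0$ and $\mathbf{1}_A$ is $\{0,1\}$-valued, this forces $\mu(f_j^{-1}A\triangle A)=0$ for each $j$, so $F^{-1}(\Sigma_m^+\times A)=\Sigma_m^+\times A$ modulo $(\mathbb{P}\times\mu)$-null sets while $0<(\mathbb{P}\times\mu)(\Sigma_m^+\times A)<1$, contradicting $F$-ergodicity.

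The reverse implication is the main obstacle and requires promoting an invariance on the $X$-projection to invariance on the whole product. Given an $F$-invariant $\phi\in L^2(\mathbb{P}\times\mu)$, I would define $\psi(x):=\int\phi(\iota,x)\,d\mathbb{P}(\iota)$; using the Bernoulli disintegration of $\iota$ along the first coordinate together with $\phi\circ F=\phi$, a direct calculation yields $\psi=\mathcal{P}\psi$, so $\mathbb{P}$-ergodicity of $\mu$ forces $\psi$ to equal the constant $c:=\int\phi\,d(\mathbb{P}\times\mu)$ for $\mu$-a.e.\ $x$. To upgrade this to $\phi\equiv c$ almost everywhere, I would exploit the iterate
\[
\phi(\iota,x)=\phi\circ F^{n}(\iota,x)=\phi\bigl(\sigma^{n}\iota,\, f_{\overline{\iota|_{[0,n-1]}}}(x)\bigr)
\]
together with the triviality of the tail $\sigma$-algebra of the Bernoulli shift (Kolmogorov $0$--$1$): taking conditional expectations along the decreasing filtration $\mathcal{G}_n:=\sigma(i_n,i_{n+1},\dots)\otimes\mathcal{B}(X)$ and applying the reverse martingale convergence theorem identifies $\phi$ with the tail-measurable function $\psi\equiv c$, closing the argument.
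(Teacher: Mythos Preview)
The paper does not prove this proposition at all; it merely quotes it from Kifer's book (Lemma~I.2.3 of \cite{MR884892}). So there is no ``paper's proof'' to compare against, and your task reduces to giving a self-contained argument.

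Your Part~(1) is correct and standard, and your contrapositive argument for the implication ``$\mathbb{P}\times\mu$ $F$-ergodic $\Rightarrow$ $\mu$ $\mathbb{P}$-ergodic'' in Part~(2) is also fine. Likewise, the computation showing that $\psi(x):=\int\phi(\iota,x)\,d\mathbb{P}(\iota)$ satisfies $\mathcal{P}\psi=\psi$, hence $\psi\equiv c$ $\mu$-a.e.\ by $\mathbb{P}$-ergodicity, is correct.

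The gap is in your final step. The reverse martingale along $\mathcal{G}_n=\sigma(i_n,i_{n+1},\dots)\otimes\mathcal{B}(X)$ gives $E[\phi\mid\mathcal{G}_n]\to E[\phi\mid\mathcal{G}_\infty]=\psi$, but it does \emph{not} give $E[\phi\mid\mathcal{G}_n]=\phi$ or $E[\phi\mid\mathcal{G}_n]\to\phi$. You seem to be using that $\phi=\phi\circ F^n$ is $\mathcal{G}_n$-measurable, but $\phi\circ F^n(\iota,x)=\phi\bigl(\sigma^n\iota,\,f_{\overline{\iota|_{[0,n-1]}}}(x)\bigr)$ still depends on $i_0,\dots,i_{n-1}$ through the second coordinate, so it is \emph{not} $\mathcal{G}_n$-measurable. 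Thus the reverse martingale alone does not ``identify $\phi$ with $\psi$''.

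The clean fix is to run the martingale in the other direction. Take the \emph{increasing} filtration $\mathcal{F}_n:=\sigma(i_0,\dots,i_{n-1})\otimes\mathcal{B}(X)$, which exhausts the full $\sigma$-algebra. Using $\phi=\phi\circ F^n$ and the product structure of $\mathbb{P}$,
\[
E[\phi\mid\mathcal{F}_n](\iota,x)=\int\phi\bigl(\eta,\,f_{\overline{\iota|_{[0,n-1]}}}(x)\bigr)\,d\mathbb{P}(\eta)=\psi\bigl(f_{\overline{\iota|_{[0,n-1]}}}(x)\bigr).
\]
Since $\psi=c$ off a $\mu$-null set $N$ and $\sum_{|w|=n}p_w\,\mu\bigl(f_{\overline{w}}^{-1}(N)\bigr)=\mu(N)=0$ by $\mathbb{P}$-stationarity, the right-hand side equals $c$ for $(\mathbb{P}\times\mu)$-a.e.\ $(\iota,x)$. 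Hence $E[\phi\mid\mathcal{F}_n]\equiv c$ for every $n$, and the forward martingale convergence theorem yields $\phi=\lim_n E[\phi\mid\mathcal{F}_n]=c$ almost everywhere, which is exactly what you need.
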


\section{Periodic-like recurrence and $\mathbf{g}$-almost product property of free semigroup actions}\label{3}
In this section, we introduce the new concept of $\mathbf{g}$-almost product property of free semigroup actions, and some concepts of transitive points, quasiregular points, upper recurrent points and Banach upper recurrent points with respect to a certain orbit of free semigroup actions.
We obtain that the $\mathbf{g}$-almost product property is weaker than the specification property under free semigroup actions.  The results in this section are inspired by \cite{MR2322186}. Throughout this section we assume that $X$ is a compact metric space, $G$ is the free semigroup generated by $m$ generators $f_0,\cdots,f_{m-1}$ which are continuous maps on $X$ and $F$ is the skew product map corresponding to the maps $f_0,\cdots,f_{m-1}$.

Let us introduce the definitions of recurrence for free semigroup actions.
\begin{definition}
	Given $\iota=(i_0,i_1,\cdots)\in\Sigma_{m}^+$, a point $x\in X$ is called {a transitive point}
	 with respect to $\iota$ of free semigroup action $G$ if  the orbit of $x$ under $\iota$,
	$$
	orb(x,\iota,G):=\{x, f_{i_0}(x),f_{i_1i_0}(x),\cdots\}
	$$
	is dense in $X$.
\end{definition}

\begin{definition}
	Given $\iota=(i_0,i_1,\cdots)\in\Sigma_{m}^+$, a point $x\in X$ is called {a quasiregular point}
	  with respect to $\iota$ of free semigroup action $G$ if a sequence
	$$
	\frac{1}{n}\sum_{j=0}^{n-1}\delta_{F^j(\iota,x)}
	$$
	{converges} in the weak$^{*}$ topology. 
\end{definition}
Denote by $\mathrm{Tran}(\iota,G)$ and $\mathrm{QR}(\iota,G)$ the sets of the transitive points and the quasiregular points with respect to $\iota$ 
{of free semigroup action,}
respectively. We write $\mathrm{Tran}(G)$ and $\mathrm{QR}(G)$ for the union of $\mathrm{Tran}(\iota,G)$ and $\mathrm{QR}(\iota,G)$ for all $\iota$, respectively.

Let $U\subset X$ be a nonempty open set and $x\in X$, $\iota=(i_0,i_1,\cdots) \in\Sigma_{m}^+$, the set of visiting time with respect to $\iota$ is defined by
$$
N_\iota (x,U):=\left \{n\in\mathbb{N}: f_{i_{n-1}\cdots i_0}(x)\in U \right \}.
$$
\begin{definition}
	Given $\iota=(i_0,i_1,\cdots)\in\Sigma_{m}^+$,  a point $x\in X$ is called {a upper recurrent point}
	 with respect to $\iota$ of free semigroup action $G$ if for any $\varepsilon>0$, the set of visiting time $N_\iota\left (x,B(x,\varepsilon)\right )$ has a positive upper density.
\end{definition}

\begin{definition}
	Given $\iota=(i_0,i_1,\cdots)\in\Sigma_{m}^+$,  a point $x\in X$ is called {a Banach upper} recurrent point
	 with respect to $\iota$ of free semigroup action $G$ if for any $\varepsilon>0$, the set of visiting time $N_\iota\left (x,B(x,\varepsilon)\right )$ has a positive Banach upper density. 
\end{definition}

Denote by $\mathrm{QW}(\iota,G)$ and $\mathrm{BR}(\iota,G)$ the sets of the upper recurrent points and the Banach upper recurrent points with respect to $\iota$ of free semigroup action $G$, respectively. Let 
$$
\mathrm{QW}(G):=\bigcup_{\iota\in\Sigma_{m}^+}\mathrm{QW}(\iota,G), \,\mathrm{BR}(G):=\bigcup_{\iota\in\Sigma_{m}^+}\mathrm{BR}(\iota,G).
$$
Let us call $\mathrm{QW}(G)$ and $\mathrm{BR}(G)$ the sets of the upper recurrent points and the Banach upper recurrent points of free semigroup action, respectively. It is easy to check that $\mathrm{QW}(G)$ coincides with the set of  the quasi-weakly almost periodic points of free semigroup action defined by Zhu and Ma \cite{MR4200965}. Clearly,
$$
\mathrm{QW}(\iota,G) \subseteq \mathrm{BR}(\iota,G).
$$

The notion of specification, introduced by Bowen \cite{MR282372}, says that one can always find a single orbit to interpolate between different pieces of orbits.  In the case of $\beta$-shifts it is known that the specification property holds for a set of $\beta$ of Lebesgue measure zero (see \cite{MR1452189}). In \cite{MR2322186}, the authors studied a new condition, called $\mathbf{g}$-almost product product property, which is weaker that specification property,  and proved the $\mathbf{g}$-almost product product  property always {holds for $\beta$-shifts. }

{Next we introduce} the concept of {$\mathbf{g}$}-almost product property of free semigroup actions:
\begin{definition}
	Let $\mathbf{g}: \mathbb{N} \rightarrow \mathbb{N}$ be a given nondecreasing unbounded map with the properties
	$$
	\mathbf{g}(n)<n \quad \text { and } \quad \lim _{n \rightarrow \infty} \frac{\mathbf{g}(n)}{n}=0 .
	$$
	The function $\mathbf{g}$ is called blowup function.
\end{definition}

Fixed $\varepsilon>0$, $w \in F_{m}^{+}$ and                                                                                                                                       $x \in X$,  define the $\mathbf{g}$-blowup of $B_{w}(x, \varepsilon)$ {as the closed set }
$$
B_{w}(\mathbf{g} ; x, \varepsilon):=\Big \{y\in X : \sharp\left \{ w'\le\overline{w}:d\left (f_{w'}(x),f_{w'}(y)\right )>\varepsilon\right \}<\mathbf{g}\left (\lvert w\rvert+1\right )\Big\}.
$$

\begin{definition}
	We say $G$ satisfies the $\mathbf{g}$-almost product property with the blowup function $\mathbf{g}$, if there exists a nonincreasing function $\mathfrak{m}:\mathbb{R}^+\to\mathbb{N}$, such that for $k\ge 2$, any $k$ points $x_1,\cdots,x_k\in X$, any positive $\varepsilon_1,\cdots,\varepsilon_k$, and any words $w_{(\varepsilon_1)},\cdots,w_{(\varepsilon_{k})}\in F_m^+$ with $\lvert w_{(\varepsilon_1)}\rvert \ge \mathfrak{m}(\varepsilon_{1}),\cdots,\lvert w_{(\varepsilon_{k})}\rvert \ge \mathfrak{m}(\varepsilon_{k})$,
	$$
	B_{w_{(\varepsilon_{1})}}(\mathbf{g};x_1,\varepsilon_{1})\cap\left (\bigcap_{j=2}^{k} f^{-1}_{\overline{w_{(\varepsilon_{j-1})}}\cdots \overline{w_{(\varepsilon_1)}}} B_{w_{(\varepsilon_{j})}}\left(\mathbf{g} ; x_{j}, \varepsilon_{j}\right)\right ) \neq \emptyset.
	$$
\end{definition}

{Under $\mathbf{g}$-almost product property, the topological entropy of periodic-like recurrent sets has been studied in  \cite{MR3963890}, but the topological entropy of such sets has not been studied in dynamical systems of  free semigroup actions. In this paper, we focus on the topological entropy of similar sets of free semigroup actions and obtain more extensive results. Therefore, it is important and necessary to introduce the $\mathbf{g}$-almost product property of free semigroup actions.}

If $m=1$,  the $\mathbf{g}$-almost product property of free semigroup actions coincides with the definition introduced by Pfister and Sullivan \cite{MR2322186,MR2109476}.

The next proposition asserts the relationship between specification property and $\mathbf{g}$-almost product property of free semigroup actions.
\begin{proposition}\label{proposition almost}
	Let $\mathbf{g}$ be any blowup function {and $G$ satisfies the specification} property. Then it has the $\mathbf{g}$-almost product property.
\end{proposition}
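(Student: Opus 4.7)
The plan is to convert the ``gap'' blocks that the specification property inserts between consecutive matching segments into ``exceptions'' tolerated by the $\mathbf{g}$-blowup of a Bowen ball, so that a single application of specification produces a point lying in the desired $\mathbf{g}$-almost product intersection.

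First I would choose the candidate $\mathfrak{m}$: since $\mathbf{g}\colon\mathbb{N}\to\mathbb{N}$ is nondecreasing and unbounded, for every $\varepsilon>0$ there is a smallest integer $\mathfrak{m}(\varepsilon)$ satisfying both $\mathfrak{m}(\varepsilon)>\mathfrak{p}(\varepsilon)$ and $\mathbf{g}(\mathfrak{m}(\varepsilon)+1)>\mathfrak{p}(\varepsilon)$; taking $\mathfrak{p}$ nonincreasing in $\varepsilon$, the resulting $\mathfrak{m}$ is nonincreasing, as the definition requires. Next, given the input $(x_j,\varepsilon_j,w_{(\varepsilon_j)})_{j=1}^{k}$ with $L_j:=|w_{(\varepsilon_j)}|\ge\mathfrak{m}(\varepsilon_j)$, set $\tilde\varepsilon:=\min_j\varepsilon_j$. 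For each $j<k$ I factor $w_{(\varepsilon_j)}=w_{(n_j)}w_{(p_j)}$ into a prefix $w_{(n_j)}$ of length $n_j:=L_j-\mathfrak{p}(\tilde\varepsilon)$ (matching word) and a suffix $w_{(p_j)}$ of length $\mathfrak{p}(\tilde\varepsilon)$ (gap word), and set $w_{(n_k)}:=w_{(\varepsilon_k)}$. Invoking the specification property of $G$ at precision $\tilde\varepsilon$ with these matching and gap words produces a point $y\in B_{w_{(n_1)}}(x_1,\tilde\varepsilon)\cap\bigcap_{j=2}^{k} f^{-1}_{\overline{w_{(p_{j-1})}}\,\overline{w_{(n_{j-1})}}\cdots\overline{w_{(p_1)}}\,\overline{w_{(n_1)}}}B_{w_{(n_j)}}(x_j,\tilde\varepsilon)$; note that $n_j+p_j=L_j$ for $j<k$ guarantees that the specification's cumulative offset through chunk $j$ matches the $\mathbf{g}$-almost product offset $\overline{w_{(\varepsilon_1)}\cdots w_{(\varepsilon_{j-1})}}$.

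Finally I check $y\in B_{w_{(\varepsilon_j)}}(\mathbf{g};x_j,\varepsilon_j)$ at the appropriate offset. Since $w_{(n_j)}$ is a prefix of $w_{(\varepsilon_j)}$, the specification gives $d(f_{w'}(x_j),f_{w'}(y))<\tilde\varepsilon\le\varepsilon_j$ for every $w'\le\overline{w_{(n_j)}}$, so the positions of chunk $j$ at which the distance can exceed $\varepsilon_j$ are confined to the suffix block $w_{(p_j)}$, yielding at most $\mathfrak{p}(\tilde\varepsilon)$ exceptions; the desired membership then reduces to the strict inequality $\mathfrak{p}(\tilde\varepsilon)<\mathbf{g}(L_j+1)$ for every $j$. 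This last step is the main obstacle: for the chunk $j_0$ attaining $\varepsilon_{j_0}=\tilde\varepsilon$ the defining property of $\mathfrak{m}$ makes it immediate, but for $j\neq j_0$ the bound $L_j\ge\mathfrak{m}(\varepsilon_j)$ only directly yields $\mathbf{g}(L_j+1)>\mathfrak{p}(\varepsilon_j)$, which is a priori weaker than $\mathfrak{p}(\tilde\varepsilon)$. The natural fix is to redistribute each specification gap across the boundary of two consecutive chunks, so that chunk $j$ absorbs at most $\mathfrak{p}(\varepsilon_j)$-many exceptions while the total gap between adjacent matches still meets the specification bound $\mathfrak{p}(\tilde\varepsilon)$; executing this reapportionment while preserving the Bowen-ball structure is the delicate bookkeeping at the technical heart of the argument.
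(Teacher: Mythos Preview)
You have correctly identified the obstacle, but it is not ``delicate bookkeeping'' --- it is a structural failure of the single-specification-call approach that no redistribution can repair. The function $\mathfrak{m}$ must be fixed \emph{before} the data $(x_j,\varepsilon_j,w_{(\varepsilon_j)})$ are presented, so $\mathfrak{m}(\varepsilon_j)$ depends only on $\varepsilon_j$, never on $\tilde\varepsilon=\min_i\varepsilon_i$. For a chunk $j$ with $\varepsilon_j\gg\tilde\varepsilon$ you are only guaranteed $L_j\ge\mathfrak{m}(\varepsilon_j)$, which may well be smaller than $\mathfrak{p}(\tilde\varepsilon)$; in that case your splitting $w_{(\varepsilon_j)}=w_{(n_j)}w_{(p_j)}$ with $|w_{(p_j)}|=\mathfrak{p}(\tilde\varepsilon)$ is not even defined. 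Redistribution fails for the same reason: take $k\ge3$, $\varepsilon_1$ tiny and $\varepsilon_2=\cdots=\varepsilon_k$ large. Every gap in a specification call at precision $\tilde\varepsilon=\varepsilon_1$ has length at least $\mathfrak{p}(\tilde\varepsilon)$, but between chunks $j$ and $j+1$ with $j\ge2$ the combined exception budget is only $\mathbf{g}(L_j+1)+\mathbf{g}(L_{j+1}+1)$, controlled merely by $\mathfrak{p}(\varepsilon_j)+\mathfrak{p}(\varepsilon_{j+1})\ll\mathfrak{p}(\tilde\varepsilon)$; no reapportionment across that boundary can cover the shortfall.

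The paper (adapting Pfister--Sullivan) resolves this by using \emph{several nested} applications of specification, one per precision level. After a dyadic discretisation of the $\varepsilon_j$'s, one first concatenates maximal runs of chunks at the coarsest level $\Delta_1$ via specification at precision $\Delta_1$, then at level $\Delta_2<\Delta_1$ glues the resulting blocks to the level-$2$ original chunks via specification at precision $\Delta_2$, and so on. A previously concatenated block is shadowed at the next level over its \emph{entire} length, so it incurs no new exceptions; each original chunk $x_j$ ultimately carries only $2\mathfrak{p}(\varepsilon_j/2)$ exceptions, matched to its own scale, and the dyadic identity $\delta_r=\sum_{i>r}\delta_i$ keeps the accumulated shadowing error below $\varepsilon_j$. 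With this construction the choice $\mathfrak{m}(\varepsilon)=\min\{m:\mathbf{g}(m)\ge 2\mathfrak{p}(\delta_r)\}$ (for the appropriate dyadic $\delta_r\le\varepsilon/2$) suffices. The idea you are missing is that a single specification call forces a \emph{uniform} minimum gap length, whereas the $\mathbf{g}$-almost product property lets the exception count vary with each $\varepsilon_j$; decoupling the two requires the level-by-level iteration, not a one-shot shadowing.
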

\begin{proof}
	This proof extends the method of Proposition 2.1 in \cite{MR2322186} to the free semigroup actions, but we provide the complete proof for the reader’s convenience. 

	Let $\mathfrak{p}(\varepsilon)$ be the positive integer in the definition of specification property of $G$ (see Sec. \ref{the concepts of free semigroup actions}) for $\varepsilon>0$. It is no restriction to suppose that the function $\mathfrak{p}(\varepsilon)$ is nonincreasing. Let $\left\{x_{1}, \cdots, x_{k}\right\}$ and $\left\{\varepsilon_{1}, \cdots, \varepsilon_{k}\right\}$ be given. Let $\delta_{r}:=2^{-r}, r \in \mathbb{N}$. Next, we may define 
	{a nonincreasing function $\mathfrak{m} :\mathbb{R}^+\to\mathbb{N}$}
	as follows: $$\mathfrak{m}(\varepsilon):=\widetilde{\mathfrak{m}}(2\delta_r),$$
	where $r=\min\{i:2 \delta_{i} \leq \varepsilon \}$ and  $\widetilde{\mathfrak{m}}(2\delta_r):= \min\left \{m: \mathbf{g}(m)\ge 2 \mathfrak{p}(\delta_r)\right \}$.
	
	It is sufficient to prove the statement for $\varepsilon_{j}$ of the form $2 \delta_{r_j}, j=1, \cdots, k$, where, as above $r_j=\min\{i: 2\delta_i\le \varepsilon_{j}\}$. Precisely, if $\varepsilon_{j}$ is not of that form, we change it into $2 \delta_{r_j}$. From now on we assume that, for all $j$, $\varepsilon_{j}$ is of the form $2 \delta_{r_j}$. Let $w_{(\varepsilon_1)},\cdots,w_{(\varepsilon_{k})}$ be the words with the length not less than $\mathfrak{m}(\varepsilon_{1}),\cdots,\mathfrak{m}(\varepsilon_{k})$, respectively. Let $n_1,\cdots,n_k$ denote the length of $w_{(\varepsilon_1)},\cdots,w_{(\varepsilon_{k})}$, respectively.
	
	We prove the proposition by an iterative construction. Let $\Delta\left(x_{j}\right):=\varepsilon_{j} / 2=\delta_{r_j}$, $w(x_j):=w_{(\varepsilon_j)}$, $p(x_j):=\mathfrak{p}\left(\Delta(x_j)\right)$ and $n(x_{j}):=n_{j}$. The sequence $\{x_{1}, \cdots, x_{k}\}$ is considered as an ordered sequence; its elements are called original points. The possible values of $\Delta\left(x_{j}\right)$ are rewritten $\Delta_{1}>\Delta_{2}>\cdots>\Delta_{q}$. A level-$i$ point is defined by an original point $x_{j}$ such that $\Delta\left(x_{j}\right)=\Delta_{i} .$

	At step 1 we consider the level-1 points labeled by
	$$
	S_{1}:=\left\{j \in[1, k]: \Delta\left(x_{j}\right)=\Delta_{1}\right\} .
	$$
	If $S_{1}=[1, k]$, then by the specification property there exists $y$ such that
	{$$d\left (f_{\overline{w{(x_1)}\lvert_{[0,i]}}}(x_1),f_{\overline{w{(x_1)}\lvert_{[0,i]}}}(y)\right )\le\Delta_{1},\quad i=p(x_1),\cdots, n(x_1)-p(x_1)-1,$$
	$$d\big(f_{\overline{w{(x_j)}\lvert_{[0,i]}}}(x_j) ,f_{\overline{w{(x_j)}\lvert_{[0,i]}}\,\overline{w{(x_{j-1})}}\cdots\overline{w{(x_1)}}}(y)\big )\le\Delta_{1},i=p(x_j),\cdots, n(x_j)-p(x_j)-1,
$$}where $j=2,\cdots,k$, which proves this case by the definition of the function $\mathfrak{m}$. If $S_{1} \neq[1, k]$, then we decompose it into maximal subsets of consecutive points, called components. (The components are defined with respect to the whole sequence.) Let $J$ be a component, say $[r, s]$ with $r<s$. By the specification property there exists $y$ such that
	{$$d\left (f_{\overline{w{(x_r)}\lvert_{[0,i]}}}(x_r),f_{\overline{w{(x_r)}\lvert_{[0,i]}}}(y)\right )\le\Delta_{1},\quad i=p(x_r),\cdots, n(x_r)-p(x_r)-1,$$
	$$
d\big (f_{\overline{w{(x_j)}\lvert_{[0,i]}}}(x_j) ,f_{\overline{w{(x_j)}\lvert_{[0,i]}}\,\overline{w{(x_{j-1})}}\cdots\overline{w{(x_r)}}}(y)\big)\le\Delta_{1}, i=p(x_j),\cdots, n(x_j)-p(x_j)-1,
$$}where $j=r+1,\cdots,s$. Hence,
	$$
	y\in 
	B_{w{(x_r)}}(\mathbf{g};x_r,\varepsilon_r)\cap\left (\bigcap_{j=r+1}^s f^{-1}_{\overline{w{(x_{j-1})}}\cdots\overline{w{(x_r)}}}B_{w{(x_j)}}\left (\mathbf{g};x_j,\varepsilon_j\right )\right ).
	$$
	We replace the sequence $\left\{x_{1}, \cdots, x_{k}\right\}$ by the (ordered) sequence
	$$
	\left\{x_{1}, \cdots, x_{r-1}, y, x_{s+1} \cdots, x_{k}\right\}
	$$
	and set, for the concatenated point $y$, let
	$$
	\Delta(y):=\Delta_{1},\, p(y):= \mathfrak{p}(\Delta(y)), \,n(y):=n(x_{r})+\cdots+n(x_{s}),\,w(y):=w(x_r)\cdots w(x_s).
	$$
	We do this operation for all components which are not singletons. After these operations we have a new (ordered) sequence $\left\{z_{1}, \cdots, z_{k_{1}}\right\}, k_{1} \leq k$, where the point $z_{i}$ is either a point of the original sequence, or a concatenated point. This ends the construction at step 1.

	Let
	$$
	S_{2}:=\left\{j \in\left[1, k_{1}\right]: \Delta\left(z_{j}\right) \geq \Delta_{2}\right\} .
	$$
	We decompose this set into components. Let $[r, s]$ be a component which is not a singleton $(r<s)$. We replace that component by a single concatenated point $y$ such that if $z_r$ is concatenated point of  $S_1$,
	$$
	d\left (f_{\overline{w{(z_r)}\lvert_{[0,i]}}}(z_r),f_{\overline{w{(z_r)}\lvert_{[0,i]}}}(y)\right )\le\Delta_{2},\quad i=0,\cdots, n(z_r)-1,
	$$
	otherwise,
	$$
	d\left (f_{\overline{w{(z_r)}\lvert_{[0,i]}}}(z_r),f_{\overline{w{(z_r)}\lvert_{[0,i]}}}(y)\right )\le\Delta_{2},\quad i=p(z_r),\cdots, n(z_r)-p(z_r)-1;
	$$
	and, for $j=r+1,\cdots,s$, if $z_j$ is concatenated point of  $S_1$,
	$$
	d\left (f_{\overline{w{(z_j)}\lvert_{[0,i]}}}(z_j),f_{\overline{w{(z_j)}\lvert_{[0,i]}}\,\overline{w(z_{j-1})}\cdots\overline{w(z_r)}}(y)\right )\le\Delta_{2},\quad i=0,\cdots, n(z_j)-1,
	$$
	otherwise,
	{	$$
		d\big (f_{\overline{w{(z_j)}\lvert_{[0,i]}}}(z_j),f_{\overline{w{(z_j)}\lvert_{[0,i]}}\,\overline{w(z_{j-1})}\cdots\overline{w(z_r)}}(y)\big )\le\Delta_{2}, i=p(z_j),\cdots, n(z_j)-p(z_j)-1.
		$$}Existence of such a $y$ is a consequence of the specification property. We set
	$$
	\Delta(y):=\Delta_{2},\, p(y):= \mathfrak{p}(\Delta(y)),\, n(y):=n(z_{r})+\cdots+n(z_{s}), \,w(y):=w(z_r)\cdots w(z_s).
	$$
	The construction of $y$ involves consecutive points of the original sequence (via the concatenated points), say points $x_{j}, j \in[u, t] .$ Since $\delta_{j}=\sum_{i>j} \delta_{i}$,
	$$
	y\in 
	B_{w{(x_u)}}(\mathbf{g};x_u,\varepsilon_u)\cap\left (\bigcap_{j=u+1}^t f^{-1}_{\overline{w{(x_{j-1})}}\cdots\overline{w{(x_u)}}}B_{w{(x_j)}}\left (\mathbf{g};x_j,\varepsilon_j\right )\right ).
	$$
	We do these operations for all components of $S_{2}$, which are not singletons. We get a new ordered sequence, still denoted by $\left\{z_{1}, \cdots, z_{k_{2}}\right\}$. This ends the construction at level 2.

	The construction at level 3 is similar to the construction at level 2, using
	$$
	S_{3}:=\left\{j\in\left[1, k_{2}\right]: \Delta\left(z_{j}\right) \geq \Delta_{3}\right\} .
	$$
	Once step $q$ is completed,  we have a single concatenated point $y$ such that
	{	
		$$
		d\left (f_{\overline{w{(x_1)}\lvert_{[0,i]}}}(x_1),f_{\overline{w{(x_1)}\lvert_{[0,i]}}}(y)\right )\le\varepsilon_{1},\quad i=p(x_1),\cdots, n(x_1)-p(x_1)-1,
		$$
		$$
		d\big (f_{\overline{w{(x_j)}\lvert_{[0,i]}}}(x_j) ,f_{\overline{w{(x_j)}\lvert_{[0,i]}}\,\overline{w{(x_{j-1})}}\cdots\overline{w{(x_1)}}}(y)\big )\le\varepsilon_j, i=p(x_j),\cdots, n(x_j)-p(x_j)-1,
		$$}where $j=2,\cdots,k.$ Observe that, for all $j$, $\mathbf{g}(n_j)\ge 2p(x_j)$. As a consequence, 
	\begin{equation}\label{Prop-1}
			y\in B_{w_{(\varepsilon_{1})}}(\mathbf{g};x_1,\varepsilon_{1})\cap\left (\bigcap_{j=2}^{k} f^{-1}_{\overline{w_{(\varepsilon_{j-1})}}\cdots \overline{w_{(\varepsilon_1)}}} B_{w_{(\varepsilon_{j})}}\left(\mathbf{g} ; x_{j}, \varepsilon_{j}\right)\right ).
	\end{equation}
\end{proof}

\begin{remark}\label{saturated}
	If $m=1$, it generates the Proposition 2.1 in \cite{MR2322186}.
\end{remark}

In \cite{MR3503951},  Rodrigues and Varandas proved that if $X$ is a compact Riemannian manifold, and $G$ is free semigroup generated by ${f_0,\cdots,f_{m-1}}$ which are all expanding maps, then $G$ satisfies the specification property, furthermore, it has the $\mathbf{g}$-almost product property by Proposition \ref{proposition almost}.  

Next, we describe an example to help us interpret the $\mathbf{g}$-almost product property of free semigroup actions.  
{
\begin{example}\label{example-1}
	Let $M$ be a compact Riemannian manifold and $G$ the free semigroup generated by $f_0,\cdots,f_{m-1}$ on $M$ which are $C^1$-local diffeomorphisms such that for any $j=0,\cdots,m-1$, $\|Df_j(x)v\|\ge\lambda_j\|v\|$ for all $x\in M$ and all $v\in T_x M$, where $\lambda_j$ is a constant larger than 1.  It follows from \cite{MR4200965} and Theorem 16 of \cite{MR3503951} that $G$ satisfies positively expansive and specification property. Let $\mathbf{g}$ be a blowup function. Consider the nonincreasing function $\mathbf{m}:\mathbb{R}^+\to\mathbb{N}$ given by Proposition \ref{proposition almost}. For $k\ge 2$, let $x_1,\cdots,x_k\in X$,  $\varepsilon_1,\cdots,\varepsilon_k>0$, and  $w_{(\varepsilon_1)},\cdots,w_{(\varepsilon_{k})}\in F_m^+$ with $\lvert w_{(\varepsilon_1)}\rvert \ge \mathfrak{m}(\varepsilon_{1}),\cdots,\lvert w_{(\varepsilon_{k})}\rvert \ge \mathfrak{m}(\varepsilon_{k})$ be given. By the formula (\ref{Prop-1}) in Proposition \ref{proposition almost}, we have that 
		$$
		B_{w_{(\varepsilon_{1})}}(\mathbf{g};x_1,\varepsilon_{1})\cap\left (\bigcap_{j=2}^{k} f^{-1}_{\overline{w_{(\varepsilon_{j-1})}}\cdots \overline{w_{(\varepsilon_1)}}} B_{w_{(\varepsilon_{j})}}\left(\mathbf{g} ; x_{j}, \varepsilon_{j}\right)\right )\neq\emptyset.
		$$
		Hence $G$ satisfies the $\mathbf{g}$-almost product property for any blowup function $\mathbf{g}$. 
\end{example}}

\begin{proposition}
	\label{2g-almost product property}
	If $G$ satisfies the $\mathbf{g}$-almost product property, then the skew product map $F$ corresponding to the maps $f_0,\cdots,f_{m-1}$ has $2\mathbf{g}$-almost product property.
\end{proposition}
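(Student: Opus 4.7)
The plan is to reduce the $2\mathbf{g}$-almost product property of $F$ to the $\mathbf{g}$-almost product property of $G$ by splitting the bad-time budget evenly between the symbolic factor $\Sigma_m^+$ and the fiber factor $X$, which is precisely where the factor of $2$ comes from. The symbolic part will be handled by an explicit concatenation, while the fiber part will be handled by directly invoking the hypothesis on $G$.

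Given $k\ge 2$ points $(\iota^{(1)},x^{(1)}),\ldots,(\iota^{(k)},x^{(k)})\in \Sigma_m^+\times X$, tolerances $\varepsilon_1,\ldots,\varepsilon_k>0$, and integers $n_1,\ldots,n_k$ (whose lower bound we will specify at the end), I would first choose the shadowing sequence by concatenation,
\[
\iota := \iota^{(1)}|_{[0,n_1-1]}\,\iota^{(2)}|_{[0,n_2-1]}\cdots\iota^{(k-1)}|_{[0,n_{k-1}-1]}\,\iota^{(k)},
\]
and set $w_{(\varepsilon_l)}:=\iota^{(l)}|_{[0,n_l-1]}$. Writing $N_l:=n_1+\cdots+n_{l-1}$ (with $N_1=0$), the concatenation forces $\iota|_{[N_l,N_l+n_l-1]}=w_{(\varepsilon_l)}$, so that $\overline{\iota|_{[0,N_l-1]}}=\overline{w_{(\varepsilon_{l-1})}}\cdots\overline{w_{(\varepsilon_1)}}$, and thus the word that iterates the fiber of the skew product from time $0$ to time $N_l$ is exactly the one appearing in the $\mathbf{g}$-almost product property for $G$.

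I would then apply the $\mathbf{g}$-almost product property of $G$ (with nonincreasing function $\mathfrak{m}$) to the data $\{x^{(l)}\}_l$, $\{\varepsilon_l\}_l$, $\{w_{(\varepsilon_l)}\}_l$, which is legal as soon as $n_l\ge \mathfrak{m}(\varepsilon_l)$. The resulting fiber point $x\in X$ satisfies
\[
f_{\overline{w_{(\varepsilon_{l-1})}}\cdots\overline{w_{(\varepsilon_1)}}}(x)\in B_{w_{(\varepsilon_l)}}(\mathbf{g};x^{(l)},\varepsilon_l)
\]
for every $l$. Since the left-hand side is the $X$-coordinate of $F^{N_l}(\iota,x)$ and the suffixes of $\overline{w_{(\varepsilon_l)}}$ encode exactly the successive iterates of $F$ on the fiber, this translates into the statement that fewer than $\mathbf{g}(n_l+1)$ of the $n_l+1$ indices $j\in\{0,1,\ldots,n_l\}$ satisfy $d\bigl(\pi_X F^{N_l+j}(\iota,x),\,\pi_X F^{j}(\iota^{(l)},x^{(l)})\bigr)>\varepsilon_l$, where $\pi_X$ is the projection onto $X$.

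For the symbolic factor the concatenation immediately gives $d'(\sigma^{N_l+j}\iota,\sigma^j\iota^{(l)})\le 2^{-(n_l-j)}$ for all $0\le j<n_l$, so at most $\lceil\log_2(1/\varepsilon_l)\rceil+1$ of the indices $j\in\{0,\ldots,n_l\}$ are bad in the symbolic factor. Setting
\[
\mathfrak{m}'(\varepsilon) := \max\Bigl\{\mathfrak{m}(\varepsilon),\ \min\bigl\{n\in\mathbb{N}:\mathbf{g}(n+1)\ge \lceil\log_2(1/\varepsilon)\rceil+1\bigr\}\Bigr\}
\]
(well-defined and nonincreasing in $\varepsilon$ because $\mathbf{g}$ is nondecreasing and unbounded), the symbolic bad times are also bounded by $\mathbf{g}(n_l+1)$ whenever $n_l\ge \mathfrak{m}'(\varepsilon_l)$. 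Combining the two bounds via $D=\max\{d',d\}$ yields at most $2\mathbf{g}(n_l+1)$ bad indices under $F$, which is exactly the condition $(\iota,x)\in B_{n_l}(2\mathbf{g};(\iota^{(l)},x^{(l)}),\varepsilon_l)$ for the skew product. I expect the main subtlety to be purely bookkeeping: tracking the word-reversal conventions so that $\overline{w_{(\varepsilon_{l-1})}}\cdots\overline{w_{(\varepsilon_1)}}$ really is the composition iterating the fiber to time $N_l$, and verifying that $\mathfrak{m}'$ above is nonincreasing in $\varepsilon$. No new idea is required beyond splitting the error budget between the two factors.
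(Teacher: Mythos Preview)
Your proposal is correct and follows essentially the same route as the paper's proof: concatenate the symbolic pieces to define $\iota$, apply the $\mathbf{g}$-almost product property of $G$ along the resulting words to obtain the fiber point $x$, and then combine the symbolic and fiber bad-time counts to get the $2\mathbf{g}$ bound. The only cosmetic difference is that the paper phrases the symbolic estimate via the specification property of the shift $\sigma$ (bounding bad times by $2\mathfrak{p}(\delta_{r_j})$ and arranging $\mathbf{g}(n_j)\ge 2\mathfrak{p}(\delta_{r_j})$), whereas you compute directly from the metric $d'$ that the number of symbolic bad times is at most $\lceil\log_2(1/\varepsilon_l)\rceil+1$; for the full shift these two bounds are essentially the same quantity, so the arguments coincide.
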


\begin{proof}
	The shift map $\sigma:\Sigma_m^+\to\Sigma_m^+$ has specification property (see \cite{MR0457675}).  Let $\mathfrak{p}(\varepsilon)$ be the positive integer in the definition of specification property of $\sigma$ for $\varepsilon>0$.  Let $\mathfrak{m}_G$ denote the function in  the $\mathbf{g}$-almost product property for $G$. Let $\delta_{r}:=2^{-r}, \,r\in \mathbb{N}$. It is no restriction to suppose that $\mathfrak{p}(\delta_r)>r$ and the function $\mathfrak{p}(\delta_r)$ is increasing as $r$ increases. Next, we may define a nonincreasing function $\mathfrak{m}_F :\mathbb{R}^+\to\mathbb{N}$ as follows:
	$$\mathfrak{m}_F(\varepsilon):=\widetilde{\mathfrak{m}}(2\delta_r)$$
	where $r=\min\{i :2\delta_i\le\varepsilon\}$ and $\widetilde{\mathfrak{m}}(2\delta_r):= \min \left\{n: \mathbf{g}(n) \geq 2 \mathfrak{p}\left(\delta_{r}\right)\,\text{and } n\ge \mathfrak{m}_G\left(\delta_{r}\right)\right\}$.

	For $k\ge 2$, let $(\iota^{(1)},x_1),\cdots, (\iota^{(k)},x_k)\in\Sigma_m^+\times X$ and $\varepsilon_{1},\cdots,\varepsilon_{k}>0$ be given. It is sufficient to prove the statement for $\varepsilon_{j}$ of the form $2 \delta_{r_j}, j=1, \cdots, k$, where, as above $r_j=\min\{i :2 \delta_{i} \leq \varepsilon_{j}\}$. Precisely, if $\varepsilon_{j}$ is not of that form, we change it into $2 \delta_{r_j}$. From now on we assume that, for all $j, \varepsilon_{j}$ is of the form $2 \delta_{r_j}$. For convenience, write $p_j:=\mathfrak{p}(\delta_{r_j})$ for all $j=1,\cdots,k$.

	For any $n_1\ge \mathfrak{m}_F(\varepsilon_{1}),\cdots, n_k\ge \mathfrak{m}_F(\varepsilon_{k})$,  let $\iota\in \Sigma_{m}^+$ satisfy the following condition:
	$$
	\iota\lvert_{[n_1+n_2+\cdots +n_{j-1},\, n_1+n_2+\cdots +n_j-1]}=\iota^{(j)}\lvert_{[0,\, n_j-1]}, \quad j=1,\cdots, k,
	$$
	where $n_0=0$. We now apply the argument $p_j>r_j$ for all $j$ to obtain
	\begin{equation}\label{3.1}
		d^\prime\left(\sigma^{n_{1}+n_2+\cdots+n_{j-1}+r} (\iota), \sigma^{r} (\iota^{(j)})\right) \leq \varepsilon_{j}, \quad  r=p_{j},p_j+1, \cdots, n_{j}-p_{j}-1.
	\end{equation} 

	Let 
	$$
	\begin{aligned}
		w_{(\varepsilon_{1})}:&=\iota\lvert_{[0, \,n_1-1]}, \\
		w_{(\varepsilon_2)}:&=\iota\lvert_{[n_1,\, n_1+n_2-1]}, \\
		&\vdots  \\
		w_{(\varepsilon_{k})}:&=\iota\lvert_{[n_1+\cdots+n_{k-1},\, n_1+\cdots +n_k-1]}.
	\end{aligned}
	$$
	Observe that $\lvert  w_{(\varepsilon_j)}\rvert \ge \mathfrak{m}_G(\varepsilon_{j})$ for each $j=1,\cdots, k$. The $\mathbf{g}$-almost product property of $G$ implies that
	$$
	B_{w_{(\varepsilon_{1})}}(\mathbf{g};x_1,\varepsilon_{1})\cap\left (\bigcap_{j=2}^{k} f^{-1}_{\overline{w_{(\varepsilon_{j-1})}}\cdots \overline{w_{(\varepsilon_1)}}} B_{w_{(\varepsilon_{j})}}\left(\mathbf{g} ; x_{j}, \varepsilon_{j}\right)\right ) \neq \emptyset.
	$$
	Take an element $x$ from the left set.  For $j=1,\cdots, k$,  define
	{$$
		\Gamma_j:=\bigg \{p_j\le r< n_j-p_j:
		d\big(f_{\overline{w_{(\varepsilon_j)}\lvert_{[0,r]}}\overline{w_{(\varepsilon_{j-1})}}\cdots\overline{w_{(\varepsilon_{1})}}}(x),
		f_{\overline{w_{(\varepsilon_j)}\lvert_{[0,r]}}}(x_j)
		\big ) \le\varepsilon_j \bigg\}.
		$$}To be more precise, for any $r\in\Gamma_j$, 
	\begin{equation}\label{3.2}
		d\left (f_{\overline{\iota\lvert_{[0,\, n_1+n_2+\cdots +n_{j-1}+r]}}}(x),
		f_{\overline{\iota^{(j)}\lvert_{[0,r]}}}(x_j)
		\right )\le\varepsilon_{j}.
	\end{equation}
	Accordingly, 
	{$$
		\begin{aligned}
			&D\Big (F^{n_1+n_2+\cdots +n_{j-1}+r} (\iota,x ) , F^r (\iota^{(j)}, x_j )\Big )\\
			=&D\Big (\big (\sigma^{n_1+n_2+\cdots +n_{j-1}+r}(\iota),\, f_{\overline{\iota\lvert_{[0,\, M_{j-1}+r]}}}(x)  \big), \big(\sigma^r(\iota^{(j)}), f_{\overline{\iota^{(j)}\lvert_{[0,r]}}}(x_j) \big )\Big)\\
			=&\max\Big \{d^\prime\big ( \sigma^{n_1+n_2+\cdots +n_{j-1}+r}(\iota),\sigma^r(\iota_j) \big ),
			d\big( f_{\overline{\iota\lvert_{[0,\, n_1+n_2+\cdots +n_{j-1}+r]}}}(x),f_{\overline{\iota^{(j)}\lvert_{[0,r]}}}(x_j)  \big )\Big \}\\
			\le& \varepsilon_{j}, \quad\text{by these inequations (\ref{3.1}) and (\ref{3.2})}.
		\end{aligned}
		$$}

	Observe that $\sharp\left (\Gamma_j\right )\ge n_j-2p_j-\mathbf{g}(n_j)\ge n_j-2\mathbf{g}(n_j)$. 
	As a consequence, 
	{
	$$
(\iota,x)\in B_{n_1}\big (2\mathbf{g};(\iota^{(1)},x_1 ),\varepsilon_{1}\big) \cap    \bigg (\bigcap_{j=2}^{k} F^{-(n_1+n_2+\cdots +n_{j-1})} B_{n_{j}}\big(2\mathbf{g} ;  (\iota^{(j)}, x_{j}), \varepsilon_{j}\big)\bigg).
$$	
}This proves that $F$ has 2$\mathbf{g}$-almost product property.
\end{proof}

\section{General (ir)regularity}\label{Irregular and regular set}
In this section, we study the more general irregular and regular sets of free semigroup actions and calculate the upper capacity topological entropy of the irregular and regular sets of free semigroup actions. The results in this section are inspired by \cite{MR3963890, MR4200965}. Throughout this section we assume that $X$ is a compact metric space, $G$ is the free semigroup generated by $m$ generators $f_0,\cdots,f_{m-1}$ which are continuous maps on $X$ and $F$ is the skew product map corresponding to the maps $f_0,\cdots,f_{m-1}$.

Let 
$$
R_\alpha(G):=\bigcup_{\iota\in\Sigma_{m}^+} R_\alpha(\iota, G),\quad I_\alpha (G):=\bigcup_{\iota\in\Sigma_{m}^+}I_\alpha(\iota, G).
$$
Let us call $R_\alpha(G)$ and $I_\alpha (G)$ the $\alpha$-regular set and $\alpha$-irregular set of free semigroup actions, respectively. 
\begin{theorem}\label{full entropy 1-1}
	Let $(X,d)$ be a compact metric space and $G$ the free semigroup action on $X$ generated by $f_0,\cdots,f_{m-1}$. Let $\alpha: \mathcal{M}(\Sigma_{m}^+\times X, F)\to\mathbb{R}$ be a continuous function. Then,
	$$
	\overline{Ch}_{R_\alpha(G)}(G)=\overline{Ch}_X(G)=h(G).
	$$
\end{theorem}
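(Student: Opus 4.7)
\emph{Proof proposal.} My plan is to sandwich $R_\alpha(G)$ between two sets whose upper capacity topological entropy is accessible. The upper bound $\overline{Ch}_{R_\alpha(G)}(G) \le \overline{Ch}_X(G) = h(G)$ is immediate from monotonicity and Remark 5.1 of \cite{MR3918203}, so the task reduces to exhibiting a subset $E \subseteq R_\alpha(G)$ with $\overline{Ch}_E(G) \ge h(G)$. The natural candidate is $E = \mathrm{QR}(G) := \bigcup_{\iota \in \Sigma_m^+} \mathrm{QR}(\iota, G)$: whenever $(\iota, x)$ is quasiregular the set $M_{(\iota,x)}(F)$ is a singleton, and any continuous $\alpha$ is trivially constant on a singleton, which gives $\mathrm{QR}(G) \subseteq R_\alpha(G)$ independently of $\alpha$.

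To estimate $\overline{Ch}_{\mathrm{QR}(G)}(G)$ from below I would lift the problem to the skew product. From the definitions one has the direct inclusion $\mathrm{QR}(F) \subseteq \Sigma_m^+ \times \mathrm{QR}(G)$, so monotonicity combined with Theorem \ref{Topological upper capacity entropy skew product} yields
\[
\log m + \overline{Ch}_{\mathrm{QR}(G)}(G) \;=\; \overline{Ch}_{\Sigma_m^+ \times \mathrm{QR}(G)}(F) \;\ge\; \overline{Ch}_{\mathrm{QR}(F)}(F).
\]
Since $h(F) = \log m + h(G)$ by Theorem \ref{Topological entropy skew product}, the theorem will follow once I establish the classical statement $\overline{Ch}_{\mathrm{QR}(F)}(F) = h(F)$ for the single continuous map $F$ on the compact metric space $\Sigma_m^+ \times X$.

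This last step is the real content, and is where I expect most of the work to go. The standard route is: for each ergodic $\mu \in \mathcal{M}(\Sigma_m^+ \times X, F)$ the Birkhoff ergodic theorem produces a set of $\mu$-generic points of full $\mu$-measure; a Bowen-type lower bound then gives $h_{\mathrm{QR}(F)}(F) \ge h_\mu(F)$; optimizing over $\mu$ via the variational principle yields $h_{\mathrm{QR}(F)}(F) \ge h(F)$; and the inequality $\overline{Ch}_E(F) \ge h_E(F)$ together with the trivial bound $\overline{Ch}_E(F) \le h(F)$ closes the chain. The main obstacle, and the only non-formal ingredient in the plan, is executing this chain rigorously in the Pesin/Bufetov framework for upper capacity entropy used here rather than in the setting of ordinary topological entropy for which the facts are textbook.
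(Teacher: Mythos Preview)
Your proposal is correct and structurally identical to the paper's argument: lift to the skew product, find a subset $A$ of $\Sigma_m^+\times R_\alpha(G)$ with full Bowen entropy for $F$, and descend via Theorem~\ref{Topological upper capacity entropy skew product} together with Theorem~\ref{Topological entropy skew product}. The only difference is the choice of $A$. The paper takes $A=R_\alpha(F)$ and invokes Theorem~4.1(4) of \cite{MR3963890} as a black box to get $h_{R_\alpha(F)}(F)=h_{\Sigma_m^+\times X}(F)$; you take the smaller set $A=\mathrm{QR}(F)$ and instead sketch the classical Bowen generic-point bound plus the variational principle to obtain $h_{\mathrm{QR}(F)}(F)=h(F)$. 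Since $\mathrm{QR}(F)\subseteq R_\alpha(F)$ and the cited result in \cite{MR3963890} is itself proved by exactly this route, your version is not a different argument so much as an unpacking of the citation. What it buys is self-containment: your proof does not depend on \cite{MR3963890} at all, only on Bowen's inequality $h_\mu(F)\le h_{G_\mu}(F)$ and the variational principle, both of which are textbook for a single continuous map on a compact metric space (so the ``obstacle'' you flag is not really one---the Pesin framework is precisely where Bowen's inequality lives).
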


\begin{proof}
	Consider a set
	$$
	R_\alpha(F):=\left \{(\iota,x)\in\Sigma_{m}^+\times X: \inf_{\nu\in M_{(\iota,x)}(F)}\alpha(\nu)= \sup_{\nu\in M_{(\iota,x)}(F)}\alpha(\nu)\right \}.
	$$
	It follows from Theorem 4.1(4) of \cite{MR3963890} that
	\begin{equation}\label{full entropy1-1-1}
		h_{R_\alpha(F)}(F)=h_{\Sigma_{m}^+\times X}(F).
	\end{equation}
	For $(\iota,x)\in R_\alpha(F)$, it is immediate that $x\in R_\alpha(\iota,G)$, then $x\in R_\alpha(G)$.  This implies that
	$$
	R_\alpha(F)\subseteq \Sigma_{m}^+\times R_\alpha(G)\subseteq \Sigma_{m}^+\times X.
	$$
	In this way we conclude from the formula (\ref{full entropy1-1-1}) that
	\begin{equation}\label{full entropy1-1-2}
		h_{\Sigma_{m}^+\times X}(F)=h_{R_\alpha(F)}(F)\le \overline{Ch}_{\Sigma_{m}^+\times R_\alpha(G)}(F).
	\end{equation}
          From Theorem \ref{Topological entropy skew product}, we obtain that 
          \begin{equation}\label{full entropy1-1-4}
          	\log m +h(G)=h_{\Sigma_{m}^+\times X}(F).
          \end{equation}
	By Theorem \ref{Topological upper capacity entropy skew product}, one has
	\begin{equation}\label{full entropy1-1-3}
		\overline{Ch}_{\Sigma_{m}^+\times R_\alpha(G)}(F)= \log m +\overline{Ch}_{R_\alpha(G)}(G).
	\end{equation}
	Combining the equations (\ref{full entropy1-1-2}), (\ref{full entropy1-1-4}) and (\ref{full entropy1-1-3}), we get that
	$$
	\begin{aligned}
		\log m+h(G)&=h_{R_\alpha(F)}(F)\\
		&\le\overline{Ch}_{\Sigma_{m}^+\times R_\alpha(G)}(F)\\
		&=\log m +\overline{Ch}_{R_\alpha(G)}(G).
	\end{aligned}
	$$
	Hence, 
	$$
	\overline{Ch}_X(G)=h(G)\le \overline{Ch}_{R_\alpha(G)}(G).
	$$
	
	Obviously,
	$$
	\overline{Ch}_{R_\alpha(G)}(G)\le \overline{Ch}_X(G)=h(G).
	$$
	Consequently,
	$$
	\overline{Ch}_{R_\alpha(G)}(G)=\overline{Ch}_X(G)=h(G).
	$$
\end{proof}
\begin{theorem}\label{full entropy 2-2}
	Suppose that $G$ has the $\mathbf{g}$-almost product property, there exists a $\mathbb{P}$-stationary measure with full support where $\mathbb{P}$ is a Bernoulli measure on $\Sigma_{m}^+$. Let $\alpha: \mathcal{M}(\Sigma_{m}^+\times X, F)\to\mathbb{R}$ be a continuous function satisfying the condition A.3. If $\inf_{\nu \in \mathcal{M}(\Sigma_{m}^+\times X, F)}\alpha(\nu)<\sup_{\nu \in \mathcal{M}(\Sigma_{m}^+\times X, F)}\alpha (\nu)$, then
	$$
	\overline{Ch}_{I_\alpha(G)}(G)=\overline{Ch}_{E(I_\alpha,\mathrm{Tran})} (G)=\overline{Ch}_X(G)=h(G),
	$$
	where $E(I_\alpha,\mathrm{Tran}):=\cup_{\iota\in\Sigma_{m}^+}\left ( I_\alpha(\iota,G)\cap \mathrm{Tran}(\iota,G)\right )$. In particular,
	$$
	\overline{Ch}_{I_\alpha(G)}(G)=\overline{Ch}_{I_\alpha (G)\cap\mathrm{Tran}(G)}(G)=\overline{Ch}_X(G)=h(G).
	$$
\end{theorem}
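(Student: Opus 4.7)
The plan is to lift the problem to the skew product $F$ on $\Sigma_m^+\times X$, invoke the corresponding classical result there, and then transfer the entropy identity back to $G$ through Theorems \ref{Topological entropy skew product} and \ref{Topological upper capacity entropy skew product}. Concretely, I introduce the set
$$
I_\alpha(F):=\bigl\{(\iota,x)\in\Sigma_m^+\times X:\inf_{\nu\in M_{(\iota,x)}(F)}\alpha(\nu)<\sup_{\nu\in M_{(\iota,x)}(F)}\alpha(\nu)\bigr\},
$$
and let $\mathrm{Tran}(F)$ denote the transitive points of the single map $F$. The goal is to prove the Huang--Tian--Wang style identity $h_{I_\alpha(F)\cap\mathrm{Tran}(F)}(F)=h(F)$, and then to project.

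First I verify the hypotheses needed to apply the Huang--Tian--Wang variational principle (the classical counterpart of Theorem \ref{full entropy 1-1} combined with transitivity and irregularity, from \cite{MR3963890}) to the single continuous map $F$ on the compact metric space $\Sigma_m^+\times X$. By Proposition \ref{2g-almost product property}, $F$ has the $2\mathbf{g}$-almost product property, which is still a blowup function. By hypothesis there exists a $\mathbb{P}$-stationary measure $\mu$ on $X$ with full support, so Proposition \ref{invariant measure of random} yields the $F$-invariant measure $\mathbb{P}\times\mu$, whose support equals $\mathrm{supp}(\mathbb{P})\times\mathrm{supp}(\mu)=\Sigma_m^+\times X$; hence $F$ admits an invariant measure with full support. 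The assumption $\inf_{\nu}\alpha(\nu)<\sup_{\nu}\alpha(\nu)$ over $\mathcal{M}(\Sigma_m^+\times X,F)$ is exactly the nondegeneracy hypothesis, and A.3 is inherited verbatim. Thus the Huang--Tian--Wang result applies to $F$ and gives
$$
h_{I_\alpha(F)\cap\mathrm{Tran}(F)}(F)=h_{\Sigma_m^+\times X}(F)=h(F).
$$

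Next I project. If $(\iota,x)\in I_\alpha(F)\cap\mathrm{Tran}(F)$, then by definition $x\in I_\alpha(\iota,G)$, while density of the $F$-orbit of $(\iota,x)$ in $\Sigma_m^+\times X$ forces density of its $X$-projection $\{f_{\overline{\iota\lvert_{[0,n-1]}}}(x)\}$ in $X$, so $x\in\mathrm{Tran}(\iota,G)$. Therefore
$$
I_\alpha(F)\cap\mathrm{Tran}(F)\subseteq\bigcup_{\iota\in\Sigma_m^+}\{\iota\}\times\bigl(I_\alpha(\iota,G)\cap\mathrm{Tran}(\iota,G)\bigr)\subseteq\Sigma_m^+\times E(I_\alpha,\mathrm{Tran}).
$$
Using the lower-entropy domination $h_Z(F)\leq\overline{Ch}_Z(F)$ together with Theorem \ref{Topological upper capacity entropy skew product} and Theorem \ref{Topological entropy skew product}, I chain
$$
\log m+h(G)=h(F)=h_{I_\alpha(F)\cap\mathrm{Tran}(F)}(F)\leq\overline{Ch}_{\Sigma_m^+\times E(I_\alpha,\mathrm{Tran})}(F)=\log m+\overline{Ch}_{E(I_\alpha,\mathrm{Tran})}(G),
$$
which yields $h(G)\leq\overline{Ch}_{E(I_\alpha,\mathrm{Tran})}(G)$. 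The trivial upper bounds $\overline{Ch}_{E(I_\alpha,\mathrm{Tran})}(G)\leq\overline{Ch}_{I_\alpha(G)\cap\mathrm{Tran}(G)}(G)\leq\overline{Ch}_{I_\alpha(G)}(G)\leq\overline{Ch}_X(G)=h(G)$ (the last equality from Remark 5.1 of \cite{MR3918203}) close the chain and give all the asserted equalities simultaneously.

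The main obstacle is the first step: carefully checking that the full Huang--Tian--Wang statement for single transformations applies to $F$ with the data $(2\mathbf{g},\mathbb{P}\times\mu,\alpha)$, especially that conditions A.3 and the fully supported invariant measure really suffice for the irregular-transitive variant of that theorem, rather than only for the regular variant used in Theorem \ref{full entropy 1-1}. Once this is settled, the projection from $F$ to $G$ is routine and mirrors the argument already used in the proof of Theorem \ref{full entropy 1-1}.
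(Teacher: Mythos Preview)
Your proposal is correct and follows essentially the same route as the paper's proof: lift to the skew product $F$, verify via Propositions \ref{invariant measure of random} and \ref{2g-almost product property} that $F$ has the $2\mathbf{g}$-almost product property and an invariant measure of full support, apply Theorem 4.1(2) of \cite{MR3963890} to obtain $h_{I_\alpha(F)\cap\mathrm{Tran}(F)}(F)=h(F)$, project via the inclusion $I_\alpha(F)\cap\mathrm{Tran}(F)\subseteq\Sigma_m^+\times E(I_\alpha,\mathrm{Tran})$, and conclude using Theorems \ref{Topological entropy skew product} and \ref{Topological upper capacity entropy skew product}. Your ``main obstacle'' is exactly what the paper resolves by citing Theorem 4.1(2) of \cite{MR3963890}, which is the irregular--transitive variant under A.3 and a fully supported invariant measure; once that citation is in place, your argument and the paper's are the same.
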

\begin{proof}
	Suppose $\mu$ is the $\mathbb{P}$-stationary measure with full support. Then, Proposition \ref{invariant measure of random} ensures that $\mathbb{P}\times \mu$ is an invariant measure under the skew product $F$ with support $\Sigma_{m}^+\times X$. From Proposition \ref{2g-almost product property}, the skew product $F$ has 2$\mathbf{g}$-almost product property. Consider a set 
	$$
	I_\alpha(F):=\left \{(\iota,x)\in\Sigma_{m}^+\times X: \inf_{\nu\in M_{(\iota,x)}(F)}\alpha(\nu)< \sup_{\nu\in M_{(\iota,x)}(F)}\alpha(\nu)\right \}.
	$$
	Hence, from Theorem 4.1 (2) of \cite{MR3963890},  one has 
	\begin{equation}\label{full entropy2-2-1}
		h_{I_\alpha(F)}(F)=h_{I_\alpha (F)\cap \mathrm{Tran}(F)}(F) =h_{\Sigma_{m}^+\times X}(F).
	\end{equation}
	It is clear that if $(\iota,x)\in I_\alpha (F)\cap \mathrm{Tran}(F)$, then $x\in I_\alpha (\iota,G)\cap \mathrm{Tran}(\iota,G)$. Accordingly, $x\in E(I_\alpha,\mathrm{Tran})$. This yields that
	$$
	I_\alpha (F)\cap \mathrm{Tran}(F) \subseteq \Sigma_{m}^+\times E(I_\alpha,\mathrm{Tran}) \subseteq \Sigma_{m}^+\times X.
	$$
	In this way we conclude from the formula (\ref{full entropy2-2-1}) that
	\begin{equation}\label{full entropy2-2-2}
		h_{\Sigma_{m}^+\times X}(F)=h_{I_\alpha (F)\cap \mathrm{Tran}(F)}(F)\le \overline{Ch}_{\Sigma_{m}^+\times E(I_\alpha,\mathrm{Tran})}(F).
	\end{equation}
	By Theorem \ref{Topological upper capacity entropy skew product}, one has
	\begin{equation}\label{full entropy2-2-3}
		\overline{Ch}_{\Sigma_{m}^+\times E(I_\alpha,\mathrm{Tran})}(F)= \log m +\overline{Ch}_{E(I_\alpha,\mathrm{Tran})}(G).
	\end{equation}
	Combining the equations (\ref{full entropy1-1-4}), (\ref{full entropy2-2-2}) and (\ref{full entropy2-2-3}), we get that
	$$
	\begin{aligned}
		\log m+h(G)&=h_{I_\alpha (F)\cap \mathrm{Tran}(F)}(F)\\
		&\le\overline{Ch}_{\Sigma_{m}^+\times E(I_\alpha,\mathrm{Tran})}(F)\\
		&=\log m +\overline{Ch}_{E(I_\alpha,\mathrm{Tran})}(G).
	\end{aligned}
	$$
	Hence, 
	$$
	\overline{Ch}_X(G)=h(G)\le \overline{Ch}_{E(I_\alpha,\mathrm{Tran})}(G).
	$$
	
	Obviously,
	$$
	\overline{Ch}_{E(I_\alpha,\mathrm{Tran})}(G)\le \overline{Ch}_X(G)=h(G).
	$$
	Consequently,
	$$
	\overline{Ch}_{E(I_\alpha,\mathrm{Tran})}(G)=\overline{Ch}_X(G)=h(G).
	$$
	We may obtain from  $E(I_\alpha,\mathrm{Tran})\subseteq I_\alpha(G)$ that 
	$$
	\overline{Ch}_{I_\alpha(G)}(G)=\overline{Ch}_{E(I_\alpha,\mathrm{Tran})}(G)=\overline{Ch}_X(G)=h(G).
	$$
	
	It is easy to check that
	$$
	\begin{aligned}
		I_\alpha(G)\cap \mathrm{Tran}(G)&=\left ( \bigcup_{\iota\in\Sigma_{m}^+} I_\alpha(\iota,G)\right )\cap \left ( \bigcup_{\iota^\prime\in\Sigma_{m}^+} \mathrm{Tran}(\iota^\prime,G)\right )\\
		&= \bigcup_{\iota,\iota^\prime\in\Sigma_{m}^+}\left ( I_\alpha(\iota,G)\cap \mathrm{Tran}(\iota^\prime,G)\right )\\
		&\supseteq \bigcup_{\iota\in\Sigma_{m}^+} \left (I_\alpha(\iota,G)\cap \mathrm{Tran}(\iota,G)\right )=E(I_\alpha,\mathrm{Tran}).
	\end{aligned}
	$$
	Hence, 
	$$
	\overline{Ch}_{I_\alpha(G)}(G)=\overline{Ch}_{I_\alpha(G)\cap \mathrm{Tran}(G)}(G)=\overline{Ch}_X(G)=h(G).
	$$
\end{proof}

\begin{remark}
	Both Theorem \ref{full entropy 1-1} and \ref{full entropy 2-2} are extension of Theorem 4.1 of \cite{MR3963890}.
\end{remark}

For $\iota=(i_0,i_1,\cdots)\in \Sigma^+_m$, consider a set
$$
I_\varphi(\iota,G):=\left \{ x\in X: \lim_{n\to \infty}\frac{1}{n}\sum_{j=0}^{n-1}\varphi\left (f_{i_{j-1}\cdots i_0}(x)\right )\text{ does not exist}\right \}.
$$
Let $R_\varphi(\iota,G):= X \backslash I_\varphi(\iota,G)$, and 
$$
R_\varphi(G):=\bigcup_{\iota\in\Sigma_{m}^+} R_\varphi(\iota,G),\quad I_\varphi(G):=\bigcup_{\iota\in\Sigma_{m}^+} I_\varphi(\iota,G).
$$
{It is easy to find that $I_\varphi(G)$ coincides with the $\varphi$-irregular set of free semigroup action defined by Zhu and Ma \cite{MR4200965}.}
For convenience, we call $R_\varphi(G)$ to be $\varphi$-regular set of free semigroup action.

For a continuous function $\varphi:X\to\mathbb{R}$, consider a function $\psi:\Sigma_{m}^+\times X\to\mathbb{R}$ such that for any $\iota=(i_0,i_1,\cdots)\in\Sigma_{m}^+$, the map $\psi$ satisfies $\psi(\iota,x)=\varphi(x)$, then $\psi$ is continuous. The continuous function $\alpha :\mathcal{M}(\Sigma_{m}^+\times X, F)\to\mathbb{R}$ is given by $\alpha(\nu)=\int \psi \mathrm{d}\nu$. It is easy to check that the function $\alpha$ satisfies the conditions A.1, A.2 and A.3. It follows from the definition of the function $\psi$ that the limit
$$
\lim_{n\to \infty}\frac{1}{n}\sum_{j=0}^{n-1}\varphi\left (f_{i_{j-1}\cdots i_0}(x)\right )
$$
exists if and only if  
$$
\inf_{\nu\in M_{(\iota,x)}(F)}\alpha(\nu)= \sup_{\nu\in M_{(\iota,x)}(F)}\alpha(\nu).
$$
Hence, $R_{\alpha}(\iota,G)=R_{\varphi}(\iota,G)$. Analogously, $I_\alpha (\iota,G)=I_{\varphi}(\iota,G)$.

\begin{corollary}\label{full entropy 1}
	Let $(X,d)$ be a compact metric space and $G$ the free semigroup action on $X$ generated by $f_0,\cdots,f_{m-1}$. Then the $\varphi$-regular set of free semigroup {action}
	carries full upper capacity topological entropy, that is,
	$$
	\overline{Ch}_{R_\varphi(G)}(G)=\overline{Ch}_X(G)=h(G).
	$$
\end{corollary}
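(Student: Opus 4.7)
The plan is to reduce this corollary directly to Theorem \ref{full entropy 1-1} by exhibiting a continuous $\alpha:\mathcal{M}(\Sigma_m^+\times X,F)\to\mathbb{R}$ whose regular set coincides with the $\varphi$-regular set. The construction needed has in fact been carried out in the paragraph immediately preceding the corollary, so the proof amounts to assembling those observations.

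Concretely, I would first define the lift $\psi:\Sigma_m^+\times X\to\mathbb{R}$ by $\psi(\iota,x):=\varphi(x)$. Since $\varphi$ is continuous on $X$ and $\psi$ depends only on the second coordinate, $\psi$ is continuous on $\Sigma_m^+\times X$. Then set
$$
\alpha(\nu):=\int\psi\,d\nu,\qquad \nu\in\mathcal{M}(\Sigma_m^+\times X,F).
$$
Weak$^*$-continuity of integration against a bounded continuous function gives that $\alpha$ is continuous, so $\alpha$ fits the hypothesis of Theorem \ref{full entropy 1-1}.

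Next I would verify the set equality $R_\alpha(\iota,G)=R_\varphi(\iota,G)$ for each $\iota\in\Sigma_m^+$. This uses the definition of $M_{(\iota,x)}(F)$ as the weak$^*$-limit points of the empirical measures $\frac{1}{n}\sum_{j=0}^{n-1}\delta_{F^j(\iota,x)}$: since $\int\psi\,d\big(\frac{1}{n}\sum_{j=0}^{n-1}\delta_{F^j(\iota,x)}\big)=\frac{1}{n}\sum_{j=0}^{n-1}\varphi(f_{i_{j-1}\cdots i_0}(x))$, the Birkhoff average of $\varphi$ along the $\iota$-orbit of $x$ converges to a common value iff $\alpha$ takes the same value on every element of $M_{(\iota,x)}(F)$, i.e.\ iff $\inf_\nu \alpha(\nu)=\sup_\nu \alpha(\nu)$ over $M_{(\iota,x)}(F)$. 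Taking the union over $\iota\in\Sigma_m^+$ gives $R_\alpha(G)=R_\varphi(G)$.

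Finally, applying Theorem \ref{full entropy 1-1} to this particular $\alpha$ yields
$$
\overline{Ch}_{R_\varphi(G)}(G)=\overline{Ch}_{R_\alpha(G)}(G)=\overline{Ch}_X(G)=h(G),
$$
which is the desired conclusion. There is no real obstacle here; the only point that warrants care is the identification $R_\alpha(\iota,G)=R_\varphi(\iota,G)$, and this is a standard consequence of the variational characterization of orbit averages via empirical measures.
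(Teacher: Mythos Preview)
Your proposal is correct and mirrors the paper's own argument: the paper carries out exactly this construction of $\psi$ and $\alpha(\nu)=\int\psi\,d\nu$ in the paragraph preceding the corollary, establishes $R_\alpha(\iota,G)=R_\varphi(\iota,G)$, and then the corollary is an immediate application of Theorem~\ref{full entropy 1-1}.
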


If $m = 1$, {then the above corollary} coincides with  {the result of Theorem 4.2} that Tian proved in \cite{MR3436391}.

\begin{corollary}\label{full entropy 2}
	Suppose that $G$ has the $\mathbf{g}$-almost product property, there exists a $\mathbb{P}$-stationary measure with full support where $\mathbb{P}$ is a Bernoulli measure on $\Sigma_{m}^+$. Let $\varphi :X\to\mathbb{R}$ be a continuous function. If {$I_\varphi(\iota,G)$} is non-empty for some $\iota\in\Sigma_{m}^+$, then
	$$
	\overline{Ch}_{I_\varphi(G)}(G)=\overline{Ch}_{E(I_\varphi,\mathrm{Tran})}(G)=\overline{Ch}_X(G)=h(G),
	$$
	where ${E(I_\varphi,\mathrm{Tran})}:=\cup_{\iota\in\Sigma_{m}^+}\left ( I_\varphi(\iota,G)\cap \mathrm{Tran}(\iota,G)\right )$. In particular,
	$$
	\overline{Ch}_{I_\varphi(G)}(G)=\overline{Ch}_{I_\varphi (G)\cap\mathrm{Tran}(G)}(G)=\overline{Ch}_X(G)=h(G).
	$$
\end{corollary}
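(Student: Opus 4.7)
The plan is to deduce this corollary from Theorem \ref{full entropy 2-2} by lifting $\varphi$ to a suitable observable on the skew product space, exactly as suggested in the paragraph immediately preceding Corollary \ref{full entropy 1}. First I would define $\psi:\Sigma_{m}^{+}\times X\to\mathbb{R}$ by $\psi(\iota,x):=\varphi(x)$; this is continuous because it is the composition of $\varphi$ with the second projection. I would then set $\alpha:\mathcal{M}(\Sigma_{m}^{+}\times X,F)\to\mathbb{R}$ by $\alpha(\nu):=\int\psi\,d\nu$, which is continuous in the weak$^{*}$ topology and, crucially, \emph{affine} in $\nu$. Because $\alpha$ is affine, the map $\theta\mapsto\alpha(\theta\mu+(1-\theta)\nu)$ is linear in $\theta$, and hence satisfies conditions A.1, A.2 and A.3 automatically.

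Next I would match the level sets: for every $\iota=(i_{0},i_{1},\dots)\in\Sigma_{m}^{+}$ and every $x\in X$ one has
$$
\frac{1}{n}\sum_{j=0}^{n-1}\psi\bigl(F^{j}(\iota,x)\bigr)=\frac{1}{n}\sum_{j=0}^{n-1}\varphi\bigl(f_{i_{j-1}\cdots i_{0}}(x)\bigr),
$$
so the Birkhoff average along the $F$-orbit of $(\iota,x)$ converges if and only if the Birkhoff average of $\varphi$ along the $\iota$-orbit of $x$ converges. Consequently $R_{\alpha}(\iota,G)=R_{\varphi}(\iota,G)$ and $I_{\alpha}(\iota,G)=I_{\varphi}(\iota,G)$ for every $\iota$, and in particular $I_{\alpha}(G)=I_{\varphi}(G)$ and $E(I_{\alpha},\mathrm{Tran})=E(I_{\varphi},\mathrm{Tran})$.

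Before invoking Theorem \ref{full entropy 2-2}, I would check its remaining hypothesis, namely $\inf_{\nu\in\mathcal{M}(\Sigma_{m}^{+}\times X,F)}\alpha(\nu)<\sup_{\nu\in\mathcal{M}(\Sigma_{m}^{+}\times X,F)}\alpha(\nu)$. By hypothesis $I_{\varphi}(\iota,G)=I_{\alpha}(\iota,G)$ is non-empty for some $\iota$, so we can pick $x$ in it. The set $M_{(\iota,x)}(F)$ consists of $F$-invariant probability measures and, by definition of $I_{\alpha}$, contains two measures $\nu_{1},\nu_{2}$ with $\alpha(\nu_{1})\neq\alpha(\nu_{2})$. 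Since $M_{(\iota,x)}(F)\subseteq\mathcal{M}(\Sigma_{m}^{+}\times X,F)$, the required strict inequality follows.

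With all hypotheses of Theorem \ref{full entropy 2-2} verified, I would apply it to $\alpha$ and translate back through the identifications above to obtain
$$
\overline{Ch}_{I_{\varphi}(G)}(G)=\overline{Ch}_{E(I_{\varphi},\mathrm{Tran})}(G)=\overline{Ch}_{X}(G)=h(G),
$$
and the ``in particular'' assertion follows from the same inclusion argument used at the end of the proof of Theorem \ref{full entropy 2-2}, namely $E(I_{\varphi},\mathrm{Tran})\subseteq I_{\varphi}(G)\cap\mathrm{Tran}(G)\subseteq X$, which sandwiches $\overline{Ch}_{I_{\varphi}(G)\cap\mathrm{Tran}(G)}(G)$ between $h(G)$ and $h(G)$. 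There is no real obstacle here; the whole proof is essentially the bookkeeping needed to turn the single continuous function $\varphi$ on $X$ into the continuous affine observable $\alpha$ on $\mathcal{M}(\Sigma_{m}^{+}\times X,F)$ and to verify the strict-inequality hypothesis from the non-emptiness of $I_{\varphi}$.
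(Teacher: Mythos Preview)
Your proposal is correct and follows essentially the same route as the paper: the paragraph immediately preceding Corollary~\ref{full entropy 1} already constructs $\psi(\iota,x)=\varphi(x)$ and $\alpha(\nu)=\int\psi\,d\nu$, records that $\alpha$ satisfies A.1--A.3, and identifies $I_{\alpha}(\iota,G)=I_{\varphi}(\iota,G)$, so that Corollary~\ref{full entropy 2} is an immediate application of Theorem~\ref{full entropy 2-2}. Your explicit verification that $I_{\varphi}(\iota,G)\neq\emptyset$ forces $\inf_{\nu}\alpha(\nu)<\sup_{\nu}\alpha(\nu)$ is a useful detail the paper leaves implicit.
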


\begin{remark}
	{The previous corollary generalizes Theorem 2}
	obtained by Zhu and Ma \cite{MR4200965}. Indeed, from Lemma 3.3 of  \cite{MR4200965}, if the free semigroup action $G$ has specification property, then the skew product $F$ has specification property.  This yields from \cite{MR646049} {that there exists a $F$-invariant probability} measures on $\Sigma_{m}^+\times X$ with full support. On the other hand, from Proposition \ref{proposition almost} we know that specification implies the $\mathbf{g}$-almost product property for free semigroup actions. Therefore, the specification property of the free semigroup $G$ implies the hypothesis of Corollary \ref{full entropy 2}, as we wanted to prove. 
\end{remark}


{We provide an example that satisfies the assumptions}
 of Theorem \ref{entropy of BR-1},  \ref{entropy of QW-1} and \ref{entropy of omega limit set}.
	\begin{example}\label{example 3}
		Given $q\in\mathbb{N}$, let $A_j\in\mathrm{GL}(q,\mathbb{Z})$ be the integer coefficients matrix  whose 
		{the determinant is different} from zero and eigenvalues have absolute value bigger than one, for $j=0,\cdots,m-1$. Let $f_{A_j}:\mathbb{T}^q\to\mathbb{T}^q$ be the linear endomorphism of the torus induced by the matrix $A_j$. Then the transformations $f_{A_0},f_{A_1},\cdots,f_{A_{m-1}}$ are all expanding (see Sec. 11.1 of \cite{MR3558990} for details). Let $G$ be the free semigroup action generated by $f_{A_0},f_{A_1},\cdots,f_{A_{m-1}}$. 	It follows from  \cite{MR3503951} that $G$ is positively expansive with $\mathbf{g}$-almost product property. 
		
		Suppose that $F:\Sigma_m^+\times X \to\Sigma_m^+\times X$ is the skew product map corresponding to the maps $f_{A_0},f_{A_1},\cdots,f_{A_{m-1}}$. 
		{From Section 4.2.5 of \cite{MR3558990}, we have that $f_{A_j}$ preserves the Lebesgue measure}
		$\mu$ on $\mathbb{T}^q$ for all $j=0,\cdots,m-1$. 
		Hence the Lebesgue measure is stationary with respect to any Bernoulli measure, so the skew product $F$ is not uniquely ergodic. This may be seen as follows. Let $\mathbb{P}_a$ and $\mathbb{P}_b$ be two Bernoulli measures on $\Sigma_{m}^+$ generated by different probability vectors $a=(a_0,a_1,\cdots,a_{m-1})$ and $b=(b_0, b_1,\cdots,b_{m-1})$, respectively. It follows from Proposition \ref{invariant measure of random} that the different product measures both $\mathbb{P}_a\times\mu$ and $\mathbb{P}_b\times\mu$ are invariant under the skew product $F$. This shows that $F$ is not uniquely ergodic. Hence, it satisfies the hypothesis of Theorem \ref{entropy of BR-1},  \ref{entropy of QW-1} and \ref{entropy of omega limit set}, as we wanted to prove.
	\end{example}

\section{Proofs of the main results}\label{BR and QW}
In this section, we complete the proofs of Theorem \ref{entropy of BR-1}, \ref{entropy of QW-1} and \ref{entropy of omega limit set}.  Let $(X,f)$ be a dynamical system and $Z_{1}, Z_{2}, \cdots, Z_{k} \subseteq X(k \ge 2)$ be a collection of subsets of $X$. Recall that $\left\{Z_{i}\right\}$ has full entropy gaps with respect to $Y \subseteq X$ if
$$
h_{(Z_{i+1} \backslash Z_{i}) \cap Y}(f)=h_{Y}(f) \text { for all } 1 \le i<k.
$$

Next throughout this section we assume that $X$ is a compact metric space, $G$ is the free semigroup generated by $m$ generators $f_0,\cdots,f_{m-1}$ which are continuous maps on $X$, and $F$ is the skew product map corresponding to the maps $f_0,\cdots,f_{m-1}$.

From \cite{MR3963890}, Tian defined the recurrent level sets of the upper Banach recurrent points with respect to a single map. For the skew product map $F$, let $\mathrm{BR}^{\#}(F):=\mathrm{BR}(F) \backslash \mathrm{QW}(F)$,
$$
\begin{aligned}
	W(F) &:=\left\{(\iota,x) \in \Sigma_{m}^+\times X : S_{\mu}=C_{(\iota,x)} \text { for every } \mu \in M_{(\iota,x)}(F)\right\}, \\
	V(F) &:=\left\{(\iota, x) \in \Sigma_{m}^+\times X: \exists \mu \in M_{(\iota,x)}(F) \text { such that } S_{\mu}=C_{(\iota,x)}\right\}, \\
	S(F)&:=\left\{(\iota,x) \in \Sigma_{m}^+\times X : \cap_{\mu \in M_{(\iota,x)}(F)} S_{\mu} \neq \emptyset\right\} .
\end{aligned}
$$
More precisely,  $\mathrm{BR}^{\#}(F)$ is divided into the following several levels with different asymptotic behaviour:
$$
\begin{aligned}
	&\mathrm{BR}_{1}(F):=\mathrm{BR}^{\#}(F)\cap W(F), \\
	&\mathrm{BR}_{2}(F):=\mathrm{BR}^{\#}(F)\cap(V(F) \cap S(F)), \\
	&\mathrm{BR}_{3}(F):=\mathrm{BR}^{\#}(F)\cap V(F),\\
	&\mathrm{BR}_{4}(F):=\mathrm{BR}^{\#}(F)\cap(V(F) \cup S(F)),\\
	&\mathrm{BR}_{5}(F):=\mathrm{BR}^{\#}(F).
\end{aligned}
$$
Then $\mathrm{BR}_{1}(F) \subseteq \mathrm{BR}_{2}(F) \subseteq \mathrm{BR}_{3}(F) \subseteq \mathrm{BR}_{4}(F) \subseteq \mathrm{BR}_{5}(F)$.
\begin{proof}[Proof of Theorem \ref{entropy of BR-1}]
	Suppose $\mu$ is the $\mathbb{P}$-stationary measure with full support. Then, Proposition \ref{invariant measure of random} ensures that $\mathbb{P}\times \mu$ is an invariant measure under the skew product $F$ with support $\Sigma_{m}^+\times X$. From Lemma \ref{2g-almost product property}, the skew product $F$ has 2$\mathbf{g}$-almost product property. 

		(1) Consider a set 
		$$
		I_\alpha(F):=\left \{(\iota,x)\in\Sigma_{m}^+\times X: \inf_{\nu\in M_{(\iota,x)}(F)}\alpha(\nu)< \sup_{\nu\in M_{(\iota,x)}(F)}\alpha(\nu)\right \}.
		$$
		If $\alpha$ satisfies A.3 and $\mathrm{Int}(L_\alpha)\neq\emptyset$, it follows from Theorem 6.1(1) of \cite{MR3963890} that
		$$
			\left \{\mathrm{QR}(F), \mathrm{BR}_1(F),\mathrm{BR}_2(F),\mathrm{BR}_3(F),\mathrm{BR}_4(F),\mathrm{BR}_5(F)\right \}
		$$
		has full entropy gaps with respect to $I_\alpha(F)\cap\mathrm{Tran}(F)$. Hence, 
		$$
			h_{I_\alpha(F)\cap\mathrm{Tran}(F) \cap \left (\mathrm{BR}_1(F)\setminus \mathrm{QR}(F)\right )}(F)=h_{I_\alpha(F)\cap\mathrm{Tran}(F)}(F)=h_{\Sigma_{m}^+\times X}(F)
		$$
	and
	$$
		h_{I_\alpha(F)\cap\mathrm{Tran}(F) \cap \left (\mathrm{BR}_j(F)\setminus \mathrm{BR}_{j-1}(F)\right )}(F)=h_{I_\alpha(F)\cap\mathrm{Tran}(F)}(F)=h_{\Sigma_{m}^+\times X}(F),
	$$
		for $j=2,\cdots,5$. From the Theorem \ref{Topological entropy skew product}, we get that
		\begin{equation}\label{entropy of BR-1-1-3}
			\log m +h(G)=h_{I_\alpha(F)\cap\mathrm{Tran}(F) \cap \left (\mathrm{BR}_1(F)\setminus \mathrm{QR}(F)\right)}(F), 
		\end{equation}
		and
		\begin{equation}\label{entropy of BR-1-1-4}
			\log m +h(G)=h_{I_\alpha(F)\cap \mathrm{Tran}(F) \cap \left (\mathrm{BR}_j(F)\setminus \mathrm{BR}_{j-1}\right )}(F)\quad\text{for } j=2,\cdots,5.
		\end{equation}
		
		By the definitions of the sets, if $(\iota,x)\in I_\alpha(F)\cap\mathrm{Tran}(F)\cap \left ( \mathrm{BR}_1(F)\setminus\mathrm{QR}(F)\right )$ then 
		$$x\in I_\alpha(\iota,G)\cap\mathrm{Tran}(\iota, G)\cap \left ( \mathrm{BR}_1(\iota,G)\setminus\mathrm{QR}(\iota,G)\right ).$$ This shows that
		\begin{equation}\label{entropy of BR-1-1-1}
			I_\alpha(F)\cap\mathrm{Tran}(F) \cap \left (\mathrm{BR}_1(F)\setminus \mathrm{QR}(F)\right )\subseteq\Sigma_{m}^+\times M_1(I_\alpha,\mathrm{Tran})\subseteq\Sigma_{m}^+\times X,
		\end{equation}
		where 
		$$
		M_1(I_\alpha, \mathrm{Tran}):=\cup_{\iota\in\Sigma_{m}^+}\big \{I_\alpha(\iota,G)\cap\mathrm{Tran}(\iota, G) \cap\left (\mathrm{BR}_1(\iota, G)\setminus \mathrm{QR}(\iota, G)\right )\big \}.
		$$
		It follows using the formula (\ref{entropy of BR-1-1-1}) and Theorem \ref{Topological upper capacity entropy skew product} that
		\begin{equation}\label{entropy of BR-1-1-2}
			\begin{aligned}
				h_{\Sigma_{m}^+\times X}(F)&=h_{I_\alpha(F)\cap\mathrm{Tran}(F) \cap \left (\mathrm{BR}_1(F)\setminus \mathrm{QR}(F)\right)}(F)\\
				&\le \overline{Ch}_{\Sigma_{m}^+\times M_1(I_\alpha,\mathrm{Tran})}(F)\\
				&=\log m + \overline{Ch}_{ M_1(I_\alpha,\mathrm{Tran})}(G).
			\end{aligned}
		\end{equation}
		Combining these two relations (\ref{entropy of BR-1-1-3}) and (\ref{entropy of BR-1-1-2}), we find that
		$$
		\overline{Ch}_{ M_1(I_\alpha,\mathrm{Tran})}(G)=\overline{Ch}_X(G)=h(G).
		$$ 
		
		Denote 
		$$
		M_j (I_\alpha,\mathrm{Tran}):=\cup_{\iota\in\Sigma_{m}^+} \left \{I_\alpha(\iota,G)\cap \mathrm{Tran}(\iota, G) \cap\left (\mathrm{BR}_j(\iota, G)\setminus \mathrm{BR}_{j-1}(\iota, G)\right )\right \},
		$$ for $j=2,\cdots,5.$
		Similarly, if $(\iota,x)\in I_\alpha(F)\cap\mathrm{Tran}(F)\cap \left ( \mathrm{BR}_j(F)\setminus\mathrm{BR}_{j-1}(F)\right )$, we obtain that  $x\in I_\alpha(\iota,G)\cap\mathrm{Tran}(\iota, G)\cap \left ( \mathrm{BR}_j(\iota,G)\setminus\mathrm{BR}_{j-1}(\iota,G)\right )$. This shows that
		$$
		I_\alpha(F)\cap\mathrm{Tran}(F) \cap \left (\mathrm{BR}_j(F)\setminus \mathrm{BR}_{j-1}(F)\right )\subseteq\Sigma_{m}^+\times M_j(I_\alpha,\mathrm{Tran})\subseteq\Sigma_{m}^+\times X.
		$$
		It follows using the Theorem \ref{Topological upper capacity entropy skew product} that
		\begin{equation}\label{entropy of BR-1-1-5}
		\begin{aligned}
			h_{\Sigma_{m}^+\times X}(F)
			&=h_{I_\alpha(F)\cap\mathrm{Tran}(F) \cap \left (\mathrm{BR}_j(F)\setminus \mathrm{BR}_{j-1}(F)\right)}(F)\\
			&\le \overline{Ch}_{\Sigma_{m}^+\times M_j(I_\alpha,\mathrm{Tran})}(F)\\
			&=\log m + \overline{Ch}_{ M_j(I_\alpha,\mathrm{Tran})}(G).
		\end{aligned}
		\end{equation}
		Combining these two relations (\ref{entropy of BR-1-1-3}) and (\ref{entropy of BR-1-1-5}), we find that
		$$
		\overline{Ch}_{ M_j(I_\alpha, \mathrm{Tran})}(G) =\overline{Ch}_X(G)=h(G),\,\text{for }j=2,\cdots,5.
		$$ 
		This completes the proof.
		
		(2) By the hypothesis about the skew product, it follows from Theorem 6.1(3) of \cite{MR3963890} that
		$$
			\left \{\emptyset,\mathrm{QR}(F)\cap \mathrm{BR}_1(F), \mathrm{BR}_1(F),\mathrm{BR}_2(F),\mathrm{BR}_3(F),\mathrm{BR}_4(F),\mathrm{BR}_5(F)\right \}
			$$
		has full entropy gaps with respect to $\mathrm{Tran}(F)$. Hence,
		$$
		h_{\mathrm{Tran}(F)\cap \mathrm{QR}(F)\cap \mathrm{BR}_1(F)}(F)=h_{\mathrm{Tran}(F)}(F)=h_{\Sigma_{m}^+\times X}(F),
		$$
		$$
		h_{\mathrm{Tran}(F) \cap \left (\mathrm{BR}_1(F)\setminus \mathrm{QR}(F)\right )}(F)=h_{\mathrm{Tran}(F)}(F)=h_{\Sigma_{m}^+\times X}(F),
		$$
		$$
		h_{\mathrm{Tran}(F) \cap \left (\mathrm{BR}_j(F)\setminus \mathrm{BR}_{j-1}(F)\right )}(F)=h_{\mathrm{Tran}(F)}(F)=h_{\Sigma_{m}^+\times X}(F),\,\text{for } j=2,\cdots,5.
		$$
		Applying Theorem \ref{Topological entropy skew product}, we obtain that
		\begin{equation}\label{entropy of BR-1-2-4}
				\log m +h(G)=h_{\mathrm{Tran}(F)\cap \mathrm{QR}(F)\cap \mathrm{BR}_1(F)}(F),
		\end{equation}
		\begin{equation}\label{entropy of BR-1-2-2}
			\log m +h(G)=h_{\mathrm{Tran}(F) \cap \left (\mathrm{BR}_1(F)\setminus \mathrm{QR}(F)\right)}(F),
		\end{equation}
		\begin{equation}\label{entropy of BR-1-2-3}
			\log m +h(G)=h_{\mathrm{Tran}(F) \cap \left (\mathrm{BR}_j(F)\setminus \mathrm{BR}_{j-1}(F)\right )}(F),\,\text{for } j=2,\cdots,5.
		\end{equation}
	
		By the definitions of the sets, if $(\iota,x)\in\mathrm{Tran}(F)\cap \mathrm{QR}(F)\cap \mathrm{BR}_1(F)$, then $x\in\mathrm{Tran}(\iota, G)\cap \mathrm{QR}(\iota,G)\cap \mathrm{BR}_1(\iota,G)$. This shows that
		$$
		\mathrm{Tran}(F)\cap \mathrm{QR}(F)\cap \mathrm{BR}_1(F)\subseteq\Sigma_{m}^+\times M_0(\mathrm{Tran})\subseteq\Sigma_{m}^+\times X,
		$$
		where $M_0(\mathrm{Tran}):=\bigcup_{\iota\in\Sigma_{m}^+}\{\mathrm{Tran}(\iota, G)\cap \mathrm{QR}(\iota,G)\cap \mathrm{BR}_1(\iota,G)\}$. 
		It follows using the Theorem \ref{Topological upper capacity entropy skew product} that
		\begin{equation}\label{entropy of BR-1-2-5}
			\begin{aligned}
				h_{\Sigma_{m}^+\times X}(F)
				&=h_{\mathrm{Tran}(F)\cap \mathrm{QR}(F)\cap \mathrm{BR}_1(F)}(F)\\
				&\le \overline{Ch}_{\Sigma_{m}^+\times M_0(\mathrm{Tran})}(F)\\
				&=\log m + \overline{Ch}_{ M_0(\mathrm{Tran})}(G).
			\end{aligned}
		\end{equation}
		Combining these two relations (\ref{entropy of BR-1-2-4}) and (\ref{entropy of BR-1-2-5}), we find that
		$$
		\overline{Ch}_{ M_0(\mathrm{Tran})}(G)=\overline{Ch}_X(G)=h(G).
		$$ 
		
		By the definitions of the sets, if $(\iota,x)\in\mathrm{Tran}(F)\cap \left ( \mathrm{BR}_1(F)\setminus\mathrm{QR}(F)\right )$, then $x\in\mathrm{Tran}(\iota, G)\cap \left ( \mathrm{BR}_1(\iota,G)\setminus\mathrm{QR}(\iota,G)\right )$. This shows that
		$$
		\mathrm{Tran}(F) \cap \left (\mathrm{BR}_1(F)\setminus \mathrm{QR}(F)\right )\subseteq\Sigma_{m}^+\times M_1(\mathrm{Tran})\subseteq\Sigma_{m}^+\times X,
		$$
		where $M_1(\mathrm{Tran}):=\bigcup_{\iota\in\Sigma_{m}^+} \left \{\mathrm{Tran}(\iota, G) \cap\left (\mathrm{BR}_1(\iota, G)\setminus \mathrm{QR}(\iota, G)\right )\right \}$. It follows using the Theorem \ref{Topological upper capacity entropy skew product} that
		\begin{equation}\label{entropy of BR-1-2-6}
		\begin{aligned}
			h_{\Sigma_{m}^+\times X}(F)
			&=h_{\mathrm{Tran}(F) \cap \left (\mathrm{BR}_1(F)\setminus \mathrm{QR}(F)\right)}(F)\\
			&\le \overline{Ch}_{\Sigma_{m}^+\times M_1(\mathrm{Tran})}(F)\\
			&=\log m + \overline{Ch}_{ M_1(\mathrm{Tran})}(G).
		\end{aligned}
		\end{equation}
		Combining these two relations (\ref{entropy of BR-1-2-2}) and (\ref{entropy of BR-1-2-6}), we find that
		$$
		\overline{Ch}_{ M_1(\mathrm{Tran})}(G)=\overline{Ch}_X(G)=h(G).
		$$ 
		
		Denote 
		$$
		M_j (\mathrm{Tran}):=\cup _{\iota\in\Sigma_{m}^+} \left \{\mathrm{Tran}(\iota, G) \cap\left (\mathrm{BR}_j(\iota, G)\setminus \mathrm{BR}_{j-1}(\iota, G)\right )\right \}, 
		$$
		for $j=2,\cdots,5$.
		Similarly, if $(\iota,x)\in\mathrm{Tran}(F)\cap \left ( \mathrm{BR}_j(F)\setminus\mathrm{BR}_{j-1}(F)\right )$, we obtain that  $x\in\mathrm{Tran}(\iota, G)\cap \left ( \mathrm{BR}_j(\iota,G)\setminus\mathrm{BR}_{j-1}(\iota,G)\right )$. This shows that
		$$
		\mathrm{Tran}(F) \cap \left (\mathrm{BR}_j(F)\setminus \mathrm{BR}_{j-1}(F)\right )\subseteq\Sigma_{m}^+\times M_j(\mathrm{Tran})\subseteq\Sigma_{m}^+\times X.
		$$
		Analogously,  using Theorem \ref{Topological upper capacity entropy skew product} and the formula \ref{entropy of BR-1-2-3}, we conclude that
		$$
		\overline{Ch}_{ M_j(\mathrm{Tran})}(G)=\overline{Ch}_X(G)=h(G), \,\text{for }j=2,\cdots,5.
		$$ 	
		This completes the proof.
		
		(3) Define a set as
		$$
		R_{\alpha}(F):=\left \{(\iota,x)\in\Sigma_{m}^+\times X: \inf_{\nu\in M_{(\iota,x)}(F)}\alpha(\nu)=\sup_{\nu\in M_{(\iota,x)}(F)}\alpha(\nu)\right \}. 
		$$
		If the skew product $F$ is not uniquely ergodic and $\alpha$ satisfies A.1 and A.2, from Theorem 6.1(4) of \cite{MR3963890}, we obtain that
		\begin{equation}\label{entropy of BR-1-2-1}
			\left \{\emptyset,\mathrm{QR}(F)\cap \mathrm{BR}_1(F), \mathrm{BR}_1(F),\mathrm{BR}_2(F),\mathrm{BR}_3(F),\mathrm{BR}_4(F),\mathrm{BR}_5(F)\right \}
		\end{equation}
		has full entropy gaps with respect to $R_\alpha(F)\cap\mathrm{Tran}(F)$. Similar to Theorem \ref{entropy of BR-1} (1) and (2), one can adopt the proof to complete the proof for $R_\alpha (\iota,G)\cap \mathrm{Tran}(\iota,G)$. Here we omit the details.
\end{proof}

\begin{proof}[Proof of Theorem \ref{entropy of QW-1}]
	By Lemma 3.4 of \cite{MR4200965}, it follows that $G$ is expansive if and only if the skew product $F$ is expansive. In the same spirit, one can adapt the proof of Theorem \ref{entropy of BR-1} using Theorem 7.1 of \cite{MR3963890} to complete the proof. Here we omit the details.
\end{proof}
\begin{remark}
	We extend the some results of  \cite{MR3963890} to the dynamical systems of free semigroup actions.
\end{remark}
\begin{corollary}\label{entropy of BR}
	Suppose that $G$ has the $\mathbf{g}$-almost product property, there exists a $\mathbb{P}$-stationary measure on $X$ with full support where $\mathbb{P}$ is a Bernoulli measure on $\Sigma_{m}^+$.  Let $\varphi: X\to\mathbb{R}$ be a continuous function.
	\begin{itemize}
		\item [(1)] If $I_\varphi (\iota,G)\neq \emptyset$ for some $\iota\in\Sigma_{m}^+$, then the unions of gaps of 
		$$
		\left \{\mathrm{QR}(\iota, G), \mathrm{BR}_1(\iota,G), \mathrm{BR}_2(\iota,G),\mathrm{BR}_3(\iota,G),\mathrm{BR}_4(\iota,G),\mathrm{BR}_5(\iota,G)\right \}
		$$
		with respect to $I_\varphi (\iota,G)\cap\mathrm{Tran}(\iota,G)$ for all $\iota\in\Sigma_m^+$ have full upper capacity topological entropy of free semigroup action $G$.
		\item [(2)] If the skew product $F$ is not uniquely ergodic, then the unions of gaps of 
		$$
		\left \{\emptyset, \mathrm{QR}(\iota, G)\cap \mathrm{BR}_1(\iota,G),\mathrm{BR}_1(\iota,G), \mathrm{BR}_2(\iota,G),\mathrm{BR}_3(\iota,G),\mathrm{BR}_4(\iota,G),\mathrm{BR}_5(\iota,G)\right \}
		$$
		with respect to $\mathrm{Tran}(\iota,G)$, $R_\varphi(\iota,G)\cap\mathrm{Tran}(\iota,G)$ for all $\iota\in\Sigma_{m}^+$ have full upper capacity topological entropy of free semigroup action $G$, respectively.
	\end{itemize}
\end{corollary}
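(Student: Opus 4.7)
The plan is to reduce the statement directly to Theorem \ref{entropy of BR-1} by lifting the continuous observable $\varphi:X\to\mathbb{R}$ to a continuous function on $\mathcal{M}(\Sigma_{m}^{+}\times X,F)$ that satisfies conditions A.1, A.2 and A.3 simultaneously. This is the same reduction used between Theorem \ref{full entropy 2-2} and Corollary \ref{full entropy 2}, but now applied to the level-set machinery rather than just the (ir)regular sets.

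First, I would set $\psi:\Sigma_{m}^{+}\times X\to\mathbb{R}$ by $\psi(\iota,x):=\varphi(x)$; continuity of $\varphi$ implies continuity of $\psi$. Then define $\alpha:\mathcal{M}(\Sigma_{m}^{+}\times X,F)\to\mathbb{R}$ by $\alpha(\nu):=\int\psi\,\mathrm{d}\nu$. Since $\alpha$ is affine in $\nu$ and depends only on the integral of a fixed continuous function, the function $\beta(\theta)=\alpha(\theta\mu+(1-\theta)\nu)$ is affine in $\theta$ on $[0,1]$, so it is either constant (when $\alpha(\mu)=\alpha(\nu)$) or strictly monotone (when $\alpha(\mu)\neq\alpha(\nu)$) and in the latter case never constant on any subinterval. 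Therefore A.1, A.2, and A.3 all hold. As noted after Theorem \ref{full entropy 2-2} in the paper, with this choice one has the identifications $R_{\alpha}(\iota,G)=R_{\varphi}(\iota,G)$ and $I_{\alpha}(\iota,G)=I_{\varphi}(\iota,G)$ for every $\iota\in\Sigma_{m}^{+}$.

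For part (1), I would show $\mathrm{Int}(L_{\alpha})\neq\emptyset$ under the hypothesis $I_{\varphi}(\iota,G)\neq\emptyset$. Pick $x\in I_{\varphi}(\iota,G)$; then the Birkhoff averages $\frac{1}{n}\sum_{j=0}^{n-1}\varphi(f_{i_{j-1}\cdots i_{0}}(x))=\int\psi\,\mathrm{d}(\frac{1}{n}\sum_{j=0}^{n-1}\delta_{F^{j}(\iota,x)})$ do not converge, hence there exist $\nu_{1},\nu_{2}\in M_{(\iota,x)}(F)$ with $\alpha(\nu_{1})\neq\alpha(\nu_{2})$. Because $M_{(\iota,x)}(F)$ is a closed connected subset of $\mathcal{M}(\Sigma_{m}^{+}\times X,F)$ (a standard property of sets of weak$^{*}$ limit points of empirical measures) and $\alpha$ is continuous, the image $\alpha(M_{(\iota,x)}(F))$ is a non-degenerate interval, so $L_{\alpha}$ has non-empty interior. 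With this verified, Theorem \ref{entropy of BR-1}(1) applied to $\alpha$ immediately gives that the prescribed unions of gaps with respect to $I_{\alpha}(\iota,G)\cap\mathrm{Tran}(\iota,G)=I_{\varphi}(\iota,G)\cap\mathrm{Tran}(\iota,G)$ carry full upper capacity topological entropy of $G$.

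For part (2), the first assertion (with respect to $\mathrm{Tran}(\iota,G)$) is exactly Theorem \ref{entropy of BR-1}(2) applied verbatim, since that statement does not involve any observable. For the second assertion (with respect to $R_{\varphi}(\iota,G)\cap\mathrm{Tran}(\iota,G)$), I would invoke Theorem \ref{entropy of BR-1}(3) with the same $\alpha$: the hypothesis that $F$ is not uniquely ergodic is assumed, and conditions A.1 and A.2 were verified above, so the theorem delivers the claim for $R_{\alpha}(\iota,G)\cap\mathrm{Tran}(\iota,G)$, which coincides with $R_{\varphi}(\iota,G)\cap\mathrm{Tran}(\iota,G)$. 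I expect the only mildly delicate step to be the $\mathrm{Int}(L_{\alpha})\neq\emptyset$ verification in part (1); the rest is a direct translation between the function-level and measure-level frameworks.
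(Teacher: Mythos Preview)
Your proposal is correct and follows exactly the reduction the paper sets up in the discussion preceding Corollary~\ref{full entropy 1}: lift $\varphi$ to $\psi(\iota,x)=\varphi(x)$, set $\alpha(\nu)=\int\psi\,\mathrm{d}\nu$, observe that $\alpha$ is affine (hence A.1--A.3 hold) and that $R_\alpha(\iota,G)=R_\varphi(\iota,G)$, $I_\alpha(\iota,G)=I_\varphi(\iota,G)$, and then invoke Theorem~\ref{entropy of BR-1}. The paper states the corollary without proof precisely because this identification has already been made explicit; your verification that $I_\varphi(\iota,G)\neq\emptyset$ forces $\mathrm{Int}(L_\alpha)\neq\emptyset$ (two invariant measures with distinct $\alpha$-values suffice, so the connectedness remark is harmless but not needed) fills in the only step the paper leaves implicit.
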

\begin{corollary}\label{entropy of QW}
	Suppose that $G$ has the $\mathbf{g}$-almost product property and positively expansive, there exists a $\mathbb{P}$-stationary measure with full support on $X$ where $\mathbb{P}$ is a Bernoulli measure on $\Sigma_{m}^+$. Let $\varphi: X\to\mathbb{R}$ be a continuous function. If the skew product $F$ is not uniquely ergodic, then the unions of gaps of 
	$$
	\left \{\emptyset, \mathrm{QW}_1(\iota,G),\mathrm{QW}_2(\iota,G),\mathrm{QW}_3(\iota,G),\mathrm{QW}_4(\iota,G),\mathrm{QW}_5(\iota,G)\right \}
	$$
	 with respect to 
	$\mathrm{Tran}(\iota,G)$, $R_\varphi(\iota,G)\cap\mathrm{Tran}(\iota,G)$ for all $\iota\in\Sigma^+_m$ have full upper capacity topological entropy of free semigroup action $G$, respectively. {If $I_\varphi(\iota,G)$}
	is non-empty for some $\iota\in\Sigma_{m}^+$, similar arguments hold with respect to $I_\varphi (\iota,G)\cap\mathrm{Tran}(\iota,G)$.
\end{corollary}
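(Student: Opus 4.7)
The plan is to deduce Corollary \ref{entropy of QW} directly from Theorem \ref{entropy of QW-1} by lifting the continuous observable $\varphi: X \to \mathbb{R}$ to an observable on the skew-product space, mirroring exactly the reduction already used in the passage following Corollary \ref{full entropy 1} to obtain the $\varphi$-analogues of Theorems \ref{full entropy 1-1} and \ref{full entropy 2-2}. Since all hypotheses (the $\mathbf{g}$-almost product property, positive expansiveness, the existence of a $\mathbb{P}$-stationary measure with full support, and non-unique ergodicity of $F$) are exactly those required by Theorem \ref{entropy of QW-1}, the only work is to choose an $\alpha$ for which $R_\alpha(\iota,G)$ and $I_\alpha(\iota,G)$ reduce to $R_\varphi(\iota,G)$ and $I_\varphi(\iota,G)$.

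First, define $\psi: \Sigma_m^+ \times X \to \mathbb{R}$ by $\psi(\iota,x) := \varphi(x)$; this is continuous since $\varphi$ is continuous and the projection is continuous. Then set
$$
\alpha(\nu) := \int \psi \, d\nu, \qquad \nu \in \mathcal{M}(\Sigma_m^+ \times X, F).
$$
The map $\alpha$ is continuous with respect to the weak$^{*}$ topology, and because integration is linear, $\beta(\theta) := \alpha(\theta\mu + (1-\theta)\nu) = \theta\alpha(\mu) + (1-\theta)\alpha(\nu)$ is an affine function of $\theta$. Hence $\beta$ is strictly monotonic on $[0,1]$ when $\alpha(\mu) \neq \alpha(\nu)$, constant when $\alpha(\mu) = \alpha(\nu)$, and never constant on a subinterval in the non-degenerate case, so $\alpha$ satisfies A.1, A.2 and A.3.

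Next, for $\iota = (i_0, i_1, \ldots) \in \Sigma_m^+$ and $x \in X$, the $n$-th empirical measure of the orbit under $F$ integrates $\psi$ as
$$
\int \psi \, d\!\left(\tfrac{1}{n}\sum_{j=0}^{n-1} \delta_{F^j(\iota,x)}\right) = \tfrac{1}{n}\sum_{j=0}^{n-1} \varphi\bigl(f_{i_{j-1}\cdots i_0}(x)\bigr),
$$
so by the discussion immediately following Corollary \ref{full entropy 1} (where exactly this identification was carried out), the limit of Birkhoff averages of $\varphi$ along the $\iota$-orbit exists if and only if $\inf_{\nu \in M_{(\iota,x)}(F)} \alpha(\nu) = \sup_{\nu \in M_{(\iota,x)}(F)} \alpha(\nu)$. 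Therefore $R_\alpha(\iota,G) = R_\varphi(\iota,G)$ and $I_\alpha(\iota,G) = I_\varphi(\iota,G)$ for every $\iota \in \Sigma_m^+$.

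Finally, apply Theorem \ref{entropy of QW-1} to this $\alpha$. The conclusions about the unions of gaps of
$$
\{\emptyset, \mathrm{QR}(\iota,G)\cap \mathrm{QW}_1(\iota,G), \mathrm{QW}_1(\iota,G), \ldots, \mathrm{QW}_5(\iota,G)\}
$$
with respect to $\mathrm{Tran}(\iota,G)$ and $R_\alpha(\iota,G) \cap \mathrm{Tran}(\iota,G)$ translate directly to the corresponding statements for $R_\varphi(\iota,G)\cap \mathrm{Tran}(\iota,G)$; likewise, if $I_\varphi(\iota,G) \neq \emptyset$ for some $\iota$, then $I_\alpha(\iota,G) \neq \emptyset$ and the theorem yields the conclusion relative to $I_\varphi(\iota,G) \cap \mathrm{Tran}(\iota,G)$. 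There is no genuine obstacle here — the only small point to check is that the $\mathrm{QR}(\iota,G)\cap \mathrm{QW}_1(\iota,G)$ term absorbed in the $\emptyset$-to-$\mathrm{QW}_1(\iota,G)$ gap of the corollary statement is consistent with Theorem \ref{entropy of QW-1}, which it is by the monotonicity $\mathrm{QR}(\iota,G)\cap \mathrm{QW}_1(\iota,G) \subseteq \mathrm{QW}_1(\iota,G)$ already noted there.
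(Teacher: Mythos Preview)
Your proposal is correct and follows exactly the approach the paper intends: the corollary has no explicit proof in the paper, and is meant to be the specialization of Theorem \ref{entropy of QW-1} obtained via the lift $\psi(\iota,x)=\varphi(x)$ and $\alpha(\nu)=\int\psi\,d\nu$, precisely the reduction already carried out before Corollaries \ref{full entropy 1} and \ref{full entropy 2}. Your handling of the dropped $\mathrm{QR}(\iota,G)\cap\mathrm{QW}_1(\iota,G)$ term via the inclusion $\mathrm{QR}(\iota,G)\cap\mathrm{QW}_1(\iota,G)\subseteq\mathrm{QW}_1(\iota,G)$ and monotonicity of upper capacity entropy is also correct.
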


\begin{proof}[Proof of {Theorem} \ref{entropy of omega limit set}]
	Suppose that $\mu$ is the $\mathbb{P}$-stationary measure with full support. Then, Proposition \ref{invariant measure of random} ensures that $\mathbb{P}\times \mu$ is an invariant measure under the skew product transformation $F$ with support $\Sigma_{m}^+\times X$. From Lemma \ref{2g-almost product property}, the skew product $F$ has 2$\mathbf{g}$-almost product property. For $j=1,\cdots,6$, let us denote:
	$$
	T_j(F):=\left \{(\iota,x)\in \mathrm{Tran} (F):(\iota,x) \text{ satisfies Case } (j)\right \},
	$$
	and 
	$$
	B_j(F):=\left \{(\iota,x)\in \mathrm{BR} (F):(\iota,x) \text{ satisfies Case } (j)\right \}.
	$$
	It follows from Theorem 1.3 of \cite{MR3963890} that
	$$
	T_j(F)\neq\emptyset,\quad B_j(F)\neq\emptyset,
	$$
	and they all have full topological entropy for all $j=1,\cdots,6$.

	For $j=1,\cdots,6$, if $(\iota,x)\in T_j(F)$ with $\iota=(i_0,i_1,\cdots)$, then the orbit of $(\iota,x)$ under $F$, that is, $\{F^k(\iota,x):k\in\mathbb{N} \}$ is dense in $\Sigma_{m}^+\times X$. This implies that $orb(x,\iota,G)$ is dense in $X$, hence $x\in \mathrm{Tran}(\iota,G)$. Notice that $(\iota,x)$ satisfies Case $(j)$. Hence $x\in T_j(G)$ and $T_j(G)\neq\emptyset$. In particular, one has that 
	$$
	T_j(F)\subseteq\Sigma_{m}^+\times T_j(G) \subseteq\Sigma_{m}^+\times X.
	$$
	Summing the two Theorems \ref{Topological entropy skew product} and \ref{Topological upper capacity entropy skew product}, we get that
	$$
	\begin{aligned}
		\log m + h(G)=h_{\Sigma_{m}^+\times X}(F)
		&=h_{T_j(F)}(F)\\
		&\le \overline{Ch}_{\Sigma_{m}^+\times T_j(G)}(F)\\
		&=\log m +\overline{Ch}_{T_j(G)}(G).
	\end{aligned}
	$$
	Since $h(G)=\overline{Ch}_X(G)$, this proves that
	$$
	h(G)=\overline{Ch}_X(G)\le\overline{Ch}_{T_j(G)}(G).
	$$
	Finally, we conclude that
	$$
	h(G)=\overline{Ch}_X(G)=\overline{Ch}_{T_j(G)}(G).
	$$

	On the other hand, 	for $j=1,\cdots,6$, if $(\iota,x)\in B_j(F)$ with $\iota=(i_0,i_1,\cdots)$, then for any $\varepsilon>0$, the set of visiting time $N\left ((\iota,x), B\left ((\iota,x),\varepsilon\right )\right )$ has a positive Banach upper density and so does $N_\iota(x,B(x,\varepsilon))$. Hence, we get that $x\in \mathrm{BR}(\iota,G)$. Using the fact that $(\iota,x)$ satisfies Case $(j)$,  hence $x\in B_j(G)$, so $B_j(G)\neq\emptyset$. In particular, one has that 
	$$
	B_j(F)\subseteq\Sigma_{m}^+\times B_j(G) \subseteq\Sigma_{m}^+\times X.
	$$
	Summing the two Theorems \ref{Topological entropy skew product} and \ref{Topological upper capacity entropy skew product}, we get that
	$$
	\begin{aligned}
		\log m + h(G)=h_{\Sigma_{m}^+\times X}(F)
		&=h_{B_j(F)}(F)\\
		&\le \overline{Ch}_{\Sigma_{m}^+\times B_j(G)}(F)\\
		&=\log m +\overline{Ch}_{B_j(G)}(G).
	\end{aligned}
	$$
	Since $h(G)=\overline{Ch}_X(G)$, this proves that
	$$
	h(G)=\overline{Ch}_X(G)\le\overline{Ch}_{B_j(G)}(G).
	$$
	Finally, we conclude that
	$$
	h(G)=\overline{Ch}_X(G)=\overline{Ch}_{B_j(G)}(G).
	$$
\end{proof}
\begin{remark}
	Theorem \ref{entropy of omega limit set} is a generalization of Theorem 1.3 of \cite{MR3963890}.
\end{remark}

\textbf{Acknowledgements}
The authors really appreciate the referees’ valuable remarks and suggestions that helped a lot. 

\bibliography{sn-bibliography}
\end{document}